\documentclass[12pt,reqno]{amsart}
\usepackage[latin1]{inputenc}
\usepackage[english]{babel}
\usepackage{fourier}
\usepackage[margin=1in]{geometry}
\usepackage{comment}
\usepackage[colorlinks,linkcolor=magenta,citecolor=cyan]{hyperref}
\usepackage{amssymb}
\usepackage{amsfonts}
\usepackage{amsmath}
\usepackage{amsthm}
\usepackage{color}
\usepackage{graphicx}
\usepackage{epsfig,mathrsfs}
\usepackage[bf,SL,BF]{subfigure}
\usepackage{fancyhdr}
\usepackage{graphicx}
\usepackage{subfigure}
\usepackage{caption}
\usepackage{wrapfig}
\usepackage{cases}
\usepackage{makecell}
\usepackage{multirow}
\usepackage{enumitem}
\usepackage{comment}

\allowdisplaybreaks

\setcounter{tocdepth}{3}

\newtheorem{theorem}{Theorem}
\newtheorem{lemma}[theorem]{Lemma}

\newtheorem{definition}[theorem]{Definition}

\newtheorem{remark}[theorem]{Remark}

\newtheorem{proposition}[theorem]{Proposition}

\numberwithin{equation}{section}

\numberwithin{theorem}{section}

\numberwithin{subsection}{section}

%\journal{}

% Bao's commands

\newcommand{\R}{\mathbb{R}}
\newcommand{\pa}{\partial}
\newcommand{\LO}[1]{L^{#1}(\Omega)}

\newcommand{\bra}[1]{\left(#1\right)}
\newcommand{\sbra}[1]{\left[#1\right]}
\newcommand{\sumi}{\sum_{i=1}^{m}}
\newcommand{\intO}{\int_{\Omega}}
\newcommand{\eps}{\varepsilon}
\newcommand{\EE}{\mathscr{E}}

\newcommand{\cutoff}{\varphi_{\varepsilon,x_0}}
\newcommand{\M}{\mathsf{M}}
\newcommand{\inner}[2]{\left\langle#1,#2\right\rangle}

\def\Nx{\partial_x}

\def\({\left(}
\def\){\right)}
\def\eb{\varepsilon}

\setlength\parindent{0pt}

\title[Non-concentration phenomenon]{Non-concentration phenomenon for one dimensional reaction-diffusion systems with mass dissipation}

%\author[A. Kostianko, C. Sun, B.Q. Tang, J. Yang, S. Zelik]{A. Kostianko, C. Sun, B.Q. Tang, J. Yang, S. Zelik}

\author[J. Yang]{Juan Yang}
\address{Juan Yang \hfill\break
	School of Mathematics and Statistics, Lanzhou University Lanzhou, 730000, PR China \; \& \;
	Institute of Mathematics and Scientific Computing, University of Graz,
	Heinrichstrasse 36, 8010 Graz, Austria}
\email{jyang20@lzu.edu.cn}

\author[A. Kostianko]{Anna Kostianko}
\address{Anna Kostianko \hfill\break
	School of Mathematics and Statistics, Lanzhou University Lanzhou, 730000, PR China \; \& \;{Imperial College, London SW7 2AZ, United Kingdom} \; \& \;
	{National Research University Higher School of Economics, Russian Federation}}
\email{a.kostianko@imperial.ac.uk}	

\author[C. Sun]{Chunyou Sun}
\address{Chunyou Sun \hfill\break
	School of Mathematics and Statistics, Lanzhou University Lanzhou, 730000, PR China}
\email{sunchy@lzu.edu.cn}

\author[B.Q. Tang]{Bao Quoc Tang}
\address{Bao Quoc Tang \hfill\break
	Institute of Mathematics and Scientific Computing, University of Graz,
	Heinrichstrasse 36, 8010 Graz, Austria}
\email{quoc.tang@uni-graz.at, baotangquoc@gmail.com}

\author[S. Zelik]{Sergey Zelik}
\address{Sergey Zelik\hfill\break
	University of Surrey, Department of Mathematics, Guildford, GU2
	7XH, United Kingdom \; \&\;
	Keldysh Institute of Applied Mathematics, Moscow, Russia \; \& \;
	{National Research University Higher School of Economics, Russian Federation}} 
\email{s.zelik@surrey.ac.uk}

\begin{document}

\begin{abstract}
%	It is well known, for reaction-diffusion systems with mass dissipation, that the $L^1$-norm is bounded uniformly in time. This is 
	Reaction-diffusion systems with mass dissipation are known to possess blow-up solutions in high dimensions when the nonlinearities have super quadratic growth rates. In dimension one, it has been shown recently that one can have global existence of bounded solutions if nonlinearities are at most cubic. For the cubic intermediate sum condition, i.e. nonlinearities might have arbitrarily high growth rates, an additional entropy inequality had to be imposed. In this article, we remove this extra entropy assumption completely and obtain global boundedness for reaction-diffusion systems with cubic intermediate sum condition. The novel idea is to show a non-concentration phenomenon for mass dissipating systems, that is the mass dissipation implies a dissipation in a Morrey space $\mathsf{M}^{1,\delta}(\Omega)$ for some $\delta>0$. As far as we are concerned, it is the first time such a bound is derived for mass dissipating reaction-diffusion systems. The results are then applied to obtain global existence and boundedness of solutions to an oscillatory Belousov-Zhabotinsky system, which satisfies cubic intermediate sum condition but does not fulfill the entropy assumption. Extensions include global existence mass controlled systems with slightly-super cubic intermediate sum condition.
%	This bound allows us to combine a modified Gagliardo-Nirenberg inequality involving Morrey spaces and an $L^p$-energy method to show that the solution is bounded in sup norm. The results are also extended to mass controlled systems.
\end{abstract}

\maketitle
\noindent{\small{\textbf{Classification AMS 2010:} 35A01, 35A09, 35K57, 35Q92.}}

\noindent{\small{\textbf{Keywords:} Reaction-diffusion systems; Mass dissipation; Global existence; Non-concentration phenomenon; Intermediate sum condition}}

%\tableofcontents

\section{Introduction and main results}
\subsection{Problem setting and Main result}
Let $\Omega=(0,L)$ for $L>0$. We study in this paper the global existence of bounded solutions to the following reaction-diffusion system for vector of concentrations $u=(u_1,\cdots,u_m),m\geq 1$,
\begin{equation}\label{eq1.1}
  \begin{cases}
    \partial_{t}u_i-d_i\partial_{xx}u_i=f_i(x,t,u), &x\in \Omega,~ t>0,\\
    \partial_xu_i(0,t)=\partial_xu_i(L,t)=0, &t>0,\\
    u_i(x,0)=u_{i,0}(x), &x\in\Omega,
  \end{cases}
\end{equation}
where $d_i>0$ are diffusion coefficients, the initial data are bounded and non-negative, i.e. $u_{i,0}\in L^{\infty}_+(\Omega), \forall i=1,\ldots, m$. The nonlinearities satisfy the following assumptions:

 %   (Bounded, Nonnegative Initial Data).
\begin{enumerate}[label=(A\theenumi),ref=A\theenumi]
	\item\label{A1} (local Lipschitz and quasi-positivity) for all $i=1,\cdots, m$, $f_i:\Omega\times\mathbb{R}_+\times\mathbb{R}_+^m\rightarrow \mathbb{R}$ is locally Lipschitz continuous in the third component and uniformly in the first two components. Moreover, they are quasi-positive,  that is for any $i\in {1,\cdots ,m}$  and any $(x,t)\in\Omega\times\mathbb{R}_+$, it holds
$$f_i(x,t,u)\geq 0\quad \text{provided} \quad u\in \mathbb{R}^m_+ \quad \text{and} \quad u_i=0;$$
	\item\label{A2} (mass dissipation) it holds
	\begin{equation*}
	\sum^m_{i=1}f_i(x,t,u)\leq 0,~~\forall u\in \R_+^m, \; \forall (x,t)\in \Omega\times \R_+;
	\end{equation*}

    \item\label{A3} ($r$-order intermediate sum condition) there exists a lower triangular matrix $A = (a_{ij}) \in \R^{(m-1)\times (m-1)}$ with non-negative elements and positive diagonal elements such that for any $i=1,\ldots, m-1$,
	\begin{equation*}
		\sum_{j=1}^{i}a_{ij}f_j(x,t,u) \leq C\bra{1+\sum_{k=1}^mu_k}^r \quad \forall \;u\in \R_+^m, \; \forall (x,t)\in\Omega\times \R_+,
	\end{equation*}
	for some $r\in [1,\infty)$ where $C>0$ is a fixed constant (when $r=1,2$, or $3$, we refer to this assumption as {\it linear}, {\it quadratic}, or {\it cubic} intermediate sum condition, respectively);
	\item\label{A4} (bound from above by polynomials) there are $\ell>0$ and $C>0$ such that 
	\begin{equation*}
		f_i(t,x,u) \le C\left(1 + \sum_{j=1}^m u_j^\ell \right), \quad \forall i=1,\ldots, m, \; \forall u\in \R_+^m, \; \forall (x,t)\in\Omega\times\R_+.
	\end{equation*}
	It is remarked that \textit{we impose no restriction on the order $\ell$ of the polynomials}.
\end{enumerate}

\medskip
The main result of this paper is the following theorem.
\begin{theorem}[Mass dissipation and cubic intermediate sum]\label{thm:main}
	Assume \eqref{A1}--\eqref{A4}, and $r$ in \eqref{A3} fulfills
	\begin{equation*}
		r \le 3.
	\end{equation*}
	Then for any non-negative bounded initial data $u_0 \in L^\infty_+(\Omega)^m$ there is a unique global classical solution to \eqref{eq1.1} which is bounded uniformly in time, i.e.
	\begin{equation*}
		\sup_{t>0}\sup_{i=1,\ldots, m}\|u_i(t)\|_{L^{\infty}(\Omega)} < +\infty.
	\end{equation*}
\end{theorem}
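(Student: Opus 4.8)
The plan is to combine a standard local--existence/continuation scheme with an a priori estimate whose core is a new non-concentration (Morrey) bound.

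\emph{Step 1: reduction to a priori bounds.} By \eqref{A1} and standard parabolic semigroup theory I would first obtain a unique nonnegative classical solution on a maximal interval $[0,T_{\max})$, with the blow-up criterion that $T_{\max}<\infty$ implies $\limsup_{t\to T_{\max}}\max_i\|u_i(t)\|_{\LO\infty}=\infty$. Hence it suffices to establish, for every finite $T$, a bound $\sup_{[0,T]}\max_i\|u_i(t)\|_{\LO\infty}\le\Phi(T,\|u_0\|_{\LO1})$ that depends on the data only through $\|u_0\|_{\LO1}$. Together with the time-translation invariance of \eqref{eq1.1} and the uniform $L^1$ bound of Step 2, applying such an estimate on the windows $[k,k+2]$ and using an initial time-layer of parabolic smoothing on $[k+1,k+2]$ upgrades the finite-time bound to the uniform-in-time bound in the statement.

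\emph{Step 2: $L^1$ bound and duality.} Summing \eqref{eq1.1} and invoking \eqref{A2} gives $\tfrac{d}{dt}\sumi\intO u_i\,dx\le0$, hence $\|u(t)\|_{\LO1}\le\|u_0\|_{\LO1}$ for all $t\ge0$ and $\int_0^\infty\sumi\intO(-f_i)\,dx\,dt\le\|u_0\|_{\LO1}$. Since $S:=\sumi u_i$ satisfies $\partial_tS-\partial_{xx}\big(\sumi d_iu_i\big)\le0$, the classical duality argument of Pierre yields $\|S\|_{\LQ2}\le C(1+T)\|u_0\|_{\LO1}$ (allowing an initial time-layer for $L^1\to L^2$ smoothing).

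\emph{Step 3: non-concentration in a Morrey space --- the crux.} Here I would prove that mass dissipation forbids spatial concentration: there exist $\delta>0$ and an exponent $q>1$, depending only on the $d_i$, with
\[
\int_0^T\Big\|\sumi u_i(t)\Big\|_{\M^{1,\delta}(\Omega)}^{q}\,dt\le C\big(T,\|u_0\|_{\LO1}\big),
\]
equivalently $\eb^{-\delta}\int_{B_\eb(x_0)\cap\Omega}\sumi u_i(x,t)\,dx$ is controlled on time-average, uniformly in $x_0\in\overline\Omega$ and in small $\eb$; the same estimate for the mass-dissipation density $\sumi(-f_i)$ is what drives it and matches the wording ``dissipation in a Morrey space''. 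I would test the summed equation --- and, more generally, each lower-triangular combination $\sum_{j\le i}a_{ij}u_j$ from \eqref{A3} --- against the scale-$\eb$ cutoffs $\cutoff$ centred at $x_0$: the reaction term in the summed equation has the good sign by \eqref{A2}, the diffusion mismatch $\sumi(d_i-\bar d)u_i$ is absorbed using the $\LQ2$ duality bound of Step 2, and the cubic upper bound of \eqref{A3} (here $r\le3$ is used) combined with the one-dimensional parabolic gain ``$L^1(Q_T)$ source $\Rightarrow L^3(Q_T)$ solution'' is exactly strong enough to close the estimate at the Morrey scaling. The exponent $\delta$ is the quantitative surrogate for the entropy inequality imposed in earlier work.

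\emph{Step 4: bootstrap to $L^\infty$.} Finally I would induct on $i=1,\dots,m$ along the lower triangular structure of $A$. Having $L^p$ bounds for $u_1,\dots,u_{i-1}$ with $p$ arbitrarily large, the inequality $a_{ii}f_i\le C(1+\sum_k u_k)^r-\sum_{j<i}a_{ij}f_j$ together with \eqref{A4} shows that the right-hand side of the $u_i$-equation is dominated, after using the earlier components, by a power of $S$; the Morrey bound of Step 3 upgrades the $L^2$-in-time control of $S$ to control of $u_i$ in a space strictly better than $L^3(Q_T)$, and finitely many further regularization steps cross the threshold $W^{2,1}_p(Q_T)\hookrightarrow L^\infty(Q_T)$ ($p>3$ in one dimension), giving $u_i\in L^\infty$. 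Re-injecting the improved bounds and running the induction finitely often yields $\Phi(T,\|u_0\|_{\LO1})$ as in Step 1.

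I expect Step 3 to be the main obstacle: in contrast to the equal-diffusion case, where $S$ is a subsolution of a single heat equation and hence instantly bounded, here one must quantify how strongly the dissipation repels concentration, and the Morrey exponent $\delta>0$ has to be extracted from the cubic intermediate sum condition alone, with no a priori entropy dissipation available.
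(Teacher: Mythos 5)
Your overall architecture (local existence plus blow-up criterion, $L^1$ and duality $L^2$ bounds, a non-concentration/Morrey estimate, then a bootstrap) matches the paper's in spirit, but Step 3 --- which you correctly identify as the crux --- is not actually established, and the mechanism you propose for it would fail. You claim the Morrey exponent $\delta>0$ can be ``extracted from the cubic intermediate sum condition alone'' by testing against scale-$\eps$ cutoffs and closing with the one-dimensional parabolic gain ``$L^1(Q_T)$ source $\Rightarrow L^3(Q_T)$ solution''. That gain is exactly borderline: in one dimension the heat kernel maps $L^1$ sources into $L^q(Q_T)$ only for $q<1+\frac 2n=3$, so the endpoint $L^3$ you need is not available, which is precisely why the exponent $r=3$ is critical and why no soft scaling argument closes at the Morrey level. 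Moreover, testing the partial combinations $\sum_{j\le i}a_{ij}u_j$ gives no sign information (only the full sum is signed under \eqref{A2}), and in the paper the Morrey bound has nothing to do with \eqref{A3}: Proposition \ref{pro:Morrey_bound} is uniform in time and is derived from \eqref{A2'} (after the reduction of Proposition \ref{imply}) together with the H\"older continuity of $Y(x,t)=\partial_x\int_0^t\sum_i d_iu_i\,ds$ (Lemma \ref{lem:Holder}, imported from \cite{sun2021regularity}) and the duality bound of Lemma \ref{lem:duality}; the exponent is $\delta=\gamma/(1+2\gamma)$ with $\gamma$ the H\"older exponent of $Y$. Your time-averaged $L^q_t$ Morrey bound is both weaker than and differently sourced from what is actually proved, and no argument is given that produces any $\delta>0$.

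There is a second, smaller but real gap in Step 4: you assert that the Morrey bound ``upgrades the $L^2$-in-time control of $S$ to control of $u_i$ in a space strictly better than $L^3(Q_T)$'', but you never say how a (time-averaged) Morrey bound improves integrability; by itself it does not, since it carries no derivative information. The paper's bridge is the interpolation inequality of Proposition \ref{pro:interp_Morrey}, $\|u\|_{L^4}^4\le \eps\|u\|_{\M^{1,\delta}}^2\|\partial_x u\|_{L^2}^2+C(\eps,\delta)\|u\|_{L^1}^4$, used inside the $L^2$-energy estimate of Lemma \ref{lem:energy} (this is where $r\le 3$ enters, through the exponent $p-1+r=4$), so that the quartic right-hand side is absorbed by the dissipation term $\|\partial_x u_i\|_{L^2}^2$; this yields $\sup_t\|u_i(t)\|_{L^2}\le C$ uniformly and then boundedness via \cite[Theorem~1.3]{fitzgibbon2021reaction}. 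Your component-wise $W^{2,1}_p$ bootstrap could plausibly replace that last step, but without the H\"older continuity of $Y$, the uniform-in-time Morrey bound, and the Morrey--Gagliardo--Nirenberg interpolation, the proposal does not close. (Also, your claimed dependence on $\|u_0\|_{L^1}$ only is stronger than what the paper obtains, which uses $\|u_0\|_{L^2}$ and leaves the $L^1$ case open.)
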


\begin{remark}\label{rem1}\hfill
    \begin{itemize}
        \item The mass control assumption \eqref{A2} can be straightforwardly replaced by
        \begin{equation*}
            \sum_{i=1}^m \alpha_i f_i(x,t,u) \leq 0, \quad \forall u\in \R_+^m,\; \forall (x,t)\in\Omega\times \R_+,
        \end{equation*}
        for some positive constants $\alpha_i$, $i=1,\ldots, m$.
%        \item The last inequality in \eqref{A3} is in fact redundant since it can be implied from \eqref{A2}.
        \item For simplicity, the initial data in Theorem \ref{thm:main} are assumed to be bounded. However, we believe that $u_{0}\in L^{3}_+(\Omega)^m$ are enough to conclude the Theorem. It remains, however, an interesting open problem if Theorem \ref{thm:main} still holds for only integrable initial data, i.e. $u_0\in L^1_+(\Omega)^m$.
%        \item The (small) constant $\xi$ depends on the distance between the biggest and smallest diffusion coefficients and, as this distance grows to infinity, $\xi$ should decrease to zero. 
    \end{itemize}
\end{remark}
 Extensions of Theorem \ref{thm:main} include the case of mass control and slightly super-cubic intermediate sum condition, which will be stated in the following.
 
\begin{theorem}[Mass control and cubic intermediate sum]\label{thm:mass_control}
	Assume \eqref{A1}, \eqref{A3} with $r=3$, \eqref{A4}, and 
	\begin{equation}\label{mass_control}
		\sum_{i=1}^{m}\alpha_i f_i(x,t,u) \le k_0 + k_1\sum_{i=1}^mu_i, \quad \forall (x,t)\in\Omega \times \R_+,
	\end{equation}
	for some $\alpha_i > 0, i=1,\ldots, m$, $k_0\ge 0$ and $k_1\in \R$. Then for any non-negative bounded initial data $u_0 \in L^\infty_+(\Omega)^m$ there is a unique global classical solution to \eqref{eq1.1}. Moreover, if $k_0 = k_1 = 0$ or $k_1<0$, then the solution is bounded uniformly in time, i.e.
		\begin{equation*}
			\sup_{t>0}\sup_{i=1,\ldots, m}\|u_i(t)\|_{L^{\infty}(\Omega)} < +\infty.
		\end{equation*}
\end{theorem}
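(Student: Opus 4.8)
The plan is to deduce Theorem~\ref{thm:mass_control} from Theorem~\ref{thm:main} by an exponential change of the unknown that turns the mass control \eqref{mass_control} into a (weighted) mass dissipation up to an additive constant, at the price of letting the structural constants in \eqref{A3}--\eqref{A4} depend on the length of the time interval. First, \eqref{A1} gives, by the classical theory of semilinear parabolic systems, a unique maximal classical solution on $[0,T_{\max})$, and uniqueness is immediate from the local Lipschitz continuity in \eqref{A1}; the quasi-positivity structure of \eqref{A1} together with $u_{i,0}\ge 0$ forces $u_i\ge 0$ on $[0,T_{\max})$. It therefore suffices to prove that on every finite interval $[0,T]$ the solution stays bounded in $L^\infty(\Omega)^m$ (which yields $T_{\max}=+\infty$), and that this bound is independent of $t$ in the two distinguished cases.

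\emph{Mass estimate and reduction.} Testing the equation for $\sum_i\alpha_i u_i$ with $1$, discarding the diffusion terms via the Neumann condition and using \eqref{mass_control}, one gets
\[
  \frac{d}{dt}\int_\Omega\sum_{i=1}^m\alpha_i u_i\,dx \;\le\; k_0|\Omega| + k_1\int_\Omega\sum_{i=1}^m u_i\,dx ,
\]
so, since the $\alpha_i$ are positive, Gronwall's lemma gives $\sup_{t\in[0,T]}\|u(t)\|_{L^1(\Omega)^m}\le C(T)$; moreover the right-hand side is $\le 0$ when $k_0=k_1=0$, and the associated scalar ODE has a bounded absorbing set when $k_1<0$, so in both of those cases $\sup_{t>0}\|u(t)\|_{L^1(\Omega)^m}<\infty$. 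Now fix $T>0$, set $\mu:=\max\{0,\,k_1/\min_i\alpha_i\}$ and $v_i:=e^{-\mu t}u_i$. Then $v$ solves \eqref{eq1.1} on $[0,T]$ with the same $d_i$ and nonlinearities $\tilde f_i(x,t,v)=e^{-\mu t}f_i(x,t,e^{\mu t}v)-\mu v_i$, which still satisfy \eqref{A1} and \eqref{A4} (with $\ell$ unchanged and a $T$-dependent constant), satisfy \eqref{A3} with $r=3$ and a constant $C_T$ depending on $T$ (expanding $e^{-\mu t}(1+e^{\mu t}\sum_k v_k)^3$ produces only factors $e^{\mu t},e^{2\mu t}\le e^{2\mu T}$), and, by the choice of $\mu$,
\[
  \sum_{i=1}^m\alpha_i\tilde f_i(x,t,v)
  \;=\; e^{-\mu t}\sum_{i=1}^m\alpha_i f_i(x,t,e^{\mu t}v) - \mu\sum_{i=1}^m\alpha_i v_i
  \;\le\; e^{-\mu t}k_0 + \sum_{i=1}^m(k_1-\mu\alpha_i)v_i \;\le\; k_0 .
\]

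\emph{Global existence.} When $k_0=0$ the last display is exactly the weighted mass dissipation of the first bullet of the Remark, so the finite-time part of Theorem~\ref{thm:main} applies to $v$ on $[0,T]$ and yields $\|v\|_{L^\infty([0,T]\times\Omega)^m}<\infty$, hence the same for $u=e^{\mu t}v$; as $T$ is arbitrary, the solution is global. When $k_0>0$ one instead appeals to the (routine) fact that the proof of Theorem~\ref{thm:main} is insensitive to replacing the mass dissipation by $\sum_i\alpha_i f_i\le C_0$ for a constant $C_0\ge 0$: the mass then grows at most linearly on $[0,T]$, and a bounded $L^\infty_{t,x}$ forcing is harmless both in the Morrey non-concentration estimate and in the subsequent bootstrap through \eqref{A3}; this again gives global existence. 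For the uniform-in-time bound: if $k_0=k_1=0$ then $\mu=0$, $v=u$, and \eqref{eq1.1} already satisfies \eqref{A1}--\eqref{A4} with the weighted mass dissipation, so Theorem~\ref{thm:main} applies verbatim. If $k_1<0$, again $\mu=0$ and $v=u$; here we combine the uniform $L^1$-bound obtained above with a rerun of the non-concentration plus bootstrap argument behind Theorem~\ref{thm:main}, in which the inhomogeneity $k_0$ in $\sum_i\alpha_i f_i\le k_0+k_1\sum_i u_i$ is absorbed because the strictly dissipative term $k_1\sum_i u_i$ (negative, and dominant for large mass) keeps the mass — and hence all constants in the argument — uniformly bounded in $t$.

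\emph{Main difficulty.} The delicate point is not the change of unknown but the bookkeeping it forces: one has to check that the proof of Theorem~\ref{thm:main} genuinely uses only time-locally bounded structural constants for its finite-time conclusions, and — more importantly — that the Morrey-space non-concentration estimate, which is the heart of that proof, degrades gracefully under (i) a bounded additive inhomogeneity in the mass relation and (ii), in the case $k_1<0$, the inhomogeneous-but-strictly-dissipative mass balance, so that the uniform $L^1(\Omega)^m$-bound can still be propagated all the way to a uniform $L^\infty(\Omega)^m$-bound.
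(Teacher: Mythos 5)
Your core device --- an exponential change of unknown that converts the mass control \eqref{mass_control} into a (weighted) mass dissipation up to an additive constant, after which Theorem \ref{thm:main} is invoked with minor modifications to tolerate that constant --- is exactly the paper's route in Section \ref{sec:first_thm} (following \cite{fellner2020global}). In fact your choice $\mu=\max\{0,k_1/\min_i\alpha_i\}$ treats the weights $\alpha_i$ more carefully than the paper's own computation, and your appeal to the ``routine fact'' that a bounded inhomogeneity in the mass relation is harmless for the finite-time conclusions is at the same level of detail as the paper's ``with slight modifications''; so for the global-existence part your proposal and the paper coincide in substance.

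Where you genuinely deviate, and where the gap lies, is the uniform-in-time bound for $k_1<0$. You set $\mu=0$ there and propose to rerun the entire Morrey non-concentration plus $L^p$-energy machinery directly for $u$ under the inhomogeneous relation $\sum_i\alpha_i f_i\le k_0+k_1\sum_i u_i$, with all constants uniform in $t$; you flag this yourself as ``the main difficulty'' and do not carry it out. This is not a cosmetic omission: Proposition \ref{pro:Morrey_bound} is proved under \eqref{A2'} (after the reduction of Proposition \ref{imply}), and once $k_0>0$ the identity \eqref{aux_eq} acquires a source term, so both the H\"older continuity of $Y$ (Lemma \ref{lem:Holder}, imported from \cite{sun2021regularity}) and the duality bound of Lemma \ref{lem:duality} would have to be re-established in that inhomogeneous setting with constants uniform over time windows --- none of which appears in your sketch. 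The paper sidesteps all of this: it keeps the transform $y_i=e^{-k_1t}u_i$ also when $k_1<0$, notes (citing \cite{fellner2020global}) that the transformed solution grows at most polynomially in time, and concludes that $u_i=e^{k_1t}y_i$ is uniformly bounded because the exponential decay factor beats that growth. Your own framework already accommodates this fix --- take $\mu$ suitably negative (e.g. $\mu=k_1/\max_i\alpha_i$) instead of $\mu=0$ --- but as written the $k_1<0$ part of your proposal is a program rather than a proof.
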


\begin{theorem}[Mass control and super-cubic intermediate sum]\label{thm:super_cubic}
	Assume \eqref{A1}, \eqref{mass_control}, and \eqref{A4}. Then there exists a constant $\xi > 0$ depending on diffusion coefficients $d_i$, and $L$, such that if \eqref{A3} holds with 
	\begin{equation*}
		r \le 3 + \xi
	\end{equation*}
	then for any non-negative bounded initial data $u_0 \in L^\infty_+(\Omega)^m$ there is a unique global classical solution to \eqref{eq1.1}. Moreover, if $k_0 = k_1 = 0$ or $k_1<0$, then the solution is bounded uniformly in time, i.e.
			\begin{equation*}
				\sup_{t>0}\sup_{i=1,\ldots, m}\|u_i(t)\|_{L^{\infty}(\Omega)} < +\infty.
			\end{equation*}
\end{theorem}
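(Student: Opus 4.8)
The plan is to regard Theorem~\ref{thm:super_cubic} as a quantitative refinement of the scheme behind Theorems~\ref{thm:main} and~\ref{thm:mass_control}: run the same duality-plus-bootstrap argument, keep careful track of where the exponent $r=3$ is actually used, and convert the slack furnished by the non-concentration estimate into the admissible margin $\xi$. Local existence, uniqueness and nonnegativity of $u$ follow from \eqref{A1} by the usual fixed-point and quasi-positivity arguments, so the entire task is an a priori $L^\infty$-bound on every finite time interval (time-uniform in the dissipative cases). I would first exploit the lower-triangular structure in \eqref{A3}: writing $z_i=\sum_{j=1}^i a_{ij}u_j$, each $z_i$ satisfies a scalar Neumann parabolic inequality $\partial_t z_i-d_i\partial_{xx}z_i\le C\bigl(1+\sum_k u_k\bigr)^r$, so the goal reduces to propagating integrability of $\sum_k u_k$ through such inequalities. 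The base estimate is the mass control: integrating \eqref{mass_control} over $\Omega$ gives a differential inequality for $M(t)=\sum_i\int_\Omega u_i\,dx$ that bounds $M$ on finite intervals, and bounds it uniformly in $t$ (with decay when $k_1<0$) precisely when $k_0=k_1=0$ or $k_1<0$, which is the source of the uniform-in-time conclusion.

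The crux is how high an exponent can be reached for $\sum_i u_i$. In dimension one the $L^2_{t,x}$ duality estimate, together with parabolic smoothing, propagates integrability only up to a critical exponent that makes exactly the cubic right-hand side re-close the estimate; this is the origin of the threshold $r=3$ in Theorem~\ref{thm:mass_control}, where the non-concentration estimate is what replaces the entropy assumption used in earlier work. The point now is that this estimate carries a small surplus: in the mass-controlled setting the same mechanism forces $\sum_i u_i$ to stay controlled in a norm built on the Morrey space $\mathsf{M}^{1,\delta}(\Omega)$ with some $\delta>0$ depending only on the $d_i$ and $L$, i.e. a genuine improvement over $L^1(\Omega)$ that forbids concentration on small sets. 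Interpolating this Morrey bound against the $L^2_{t,x}$ bound and passing it through one parabolic regularization places $\sum_i u_i$ in $L^p_{t,x}$ for $p$ strictly above the cubic-critical value; quantifying this interpolation produces $\xi=\xi(\delta,\{d_i\},L)>0$, the amount by which $3$ may be exceeded. Since the non-concentration mechanism weakens as the diffusivities become more disparate or the domain larger, $\xi$ is correspondingly small and depends on $d_i$ and $L$ as stated.

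With $\sum_i u_i\in L^p_{t,x}$ for such a $p$, the right-hand side $C\bigl(1+\sum_k u_k\bigr)^r$ lies in $L^q_{t,x}$ for some $q>1$, and a finite bootstrap — iterating $L^q$--$L^\infty$ smoothing of the Neumann heat semigroup together with the polynomial bound \eqref{A4}, i.e.\ a Moser/Alikakos-type iteration — upgrades each $z_i$, hence each $u_i$, to $L^\infty$ on finite intervals, yielding the global classical solution; in the cases $k_0=k_1=0$ or $k_1<0$ all constants are time-independent because $M(t)$ is bounded (resp.\ decays), so the bound is uniform in $t$. The main obstacle is the quantitative interpolation step: making the relevant embedding of the Morrey bound, interpolated with $L^2_{t,x}$, into $L^p_{t,x}$ explicit, checking that after one parabolic smoothing the resulting exponent strictly exceeds the one required to re-close the duality estimate, and — most delicately — controlling how $\delta$, and hence $\xi$, depends on the heat kernels with different diffusivities, in particular its non-degeneracy for every fixed finite configuration $(d_1,\dots,d_m,L)$. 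This is both the heart of the argument and the reason only a slightly super-cubic range is attainable.
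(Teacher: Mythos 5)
The decisive quantitative step is missing. You assert that ``interpolating the Morrey bound against the $L^2_{t,x}$ bound and passing it through one parabolic regularization places $\sum_i u_i$ in $L^p_{t,x}$ for $p$ strictly above the cubic-critical value; quantifying this interpolation produces $\xi$,'' but you never exhibit the inequality that produces $\xi$, and this is the entire content of the theorem. Note that the known space-time improvement of duality (the $L^{2+\eps}_{t,x}$ bound) only reaches $r\le 1+\frac{4}{n+2}$, which in one dimension is below $3$, so no soft interpolation of the $L^\infty_t\M^{1,\delta}$ bound with the $L^2_{t,x}$ bound obviously crosses, let alone exceeds, the cubic threshold. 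The paper's actual mechanism is a \emph{localized} energy argument: apply the functional $\EE_2$ to $u_i\cutoff$, control the resulting term $\inner{\cutoff^2}{|u_i|^{4+\xi}}$ by the localized Gagliardo--Nirenberg inequality of Lemma \ref{lem-key2}, which places the factor $\|u_i\|_{L^1(\Omega\cap[x_0-2\eps,x_0+2\eps])}^{2-\xi}\|u_i\cutoff\|_{L^2(\Omega)}^{2\xi}$ in front of the gradient term; Proposition \ref{pro:Morrey_bound} converts the $L^1$ factor into $\eps^{(2-\xi)\delta}$, and a continuity-in-time argument based on Lemma \ref{lem:duality} (on the interval where absorption holds, $\EE_2[u\cutoff]$ grows at most like $\eps^{-(3+\xi)}$) shows the absorption threshold can never be reached provided $(2-\xi)\delta>\xi(3+\xi)$, i.e. \eqref{xi}. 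This is precisely how $\xi$ is quantified and why it depends on $d_i$ and $L$ (through the H\"older exponent $\gamma$ of Lemma \ref{lem:Holder} and $\delta=\gamma/(1+2\gamma)$ in \eqref{delta}). None of this --- the cutoff energy functional, the exponent bookkeeping $\eps^{(2-\xi)\delta-\xi(3+\xi)}$, the time-continuity argument --- appears in your sketch, and it is not replaceable by the asserted interpolation.

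Two further steps would fail as written. First, $z_i=\sum_{j\le i}a_{ij}u_j$ does \emph{not} satisfy $\partial_t z_i-d_i\partial_{xx}z_i\le C(1+\sum_k u_k)^r$ when the $d_j$ differ: the diffusion term is $\partial_{xx}\bigl(\sum_{j\le i}a_{ij}d_ju_j\bigr)$, which is exactly why the paper works with the $L^p$-energy functional of Lemma \ref{lem:energy} (or, in the literature, duality for operators of the form $\partial_t-\partial_{xx}(\omega\,\cdot)$) rather than scalar comparison and heat-semigroup smoothing for $z_i$. Second, the concluding Moser/Alikakos iteration ``together with the polynomial bound \eqref{A4}'' cannot close, because \eqref{A4} puts no restriction on $\ell$: an $L^p_{t,x}$ bound on $\sum_i u_i$ for a single finite $p$ does not place any individual $f_i$ in $L^q_{t,x}$ with $q>1$, so there is no smoothing to iterate on the separate equations; the passage from the $L^\infty(0,T;L^2(\Omega))$ bound to $L^\infty$ again requires the intermediate-sum structure (in the paper this is delegated to \cite[Theorem 1.3]{fitzgibbon2021reaction}). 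Finally, the uniform-in-time conclusion for $k_1<0$ is obtained via the change of variables $y_i=e^{-k_1t}u_i$ as in Theorem \ref{thm:mass_control}, not merely from boundedness of the total mass; the constants in the H\"older, Morrey and duality estimates are a priori derived on finite time windows, so your claim that they are automatically time-independent needs the same device.
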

\begin{remark}
	Obviously, proving Theorem \ref{thm:super_cubic} implies immediately Theorems \ref{thm:main} and \ref{thm:mass_control}. However, we choose to present in detail the proof of Theorem \ref{thm:main} as it shows the main novelty of the new bound in Morrey space. Proofs of Theorems \ref{thm:mass_control} and \ref{thm:super_cubic} follow with some technical but slight changes, which will be shown in Section \ref{sec:extensions}.
\end{remark}

\subsection{State of the art}
The global well-posedness of reaction-diffusion systems of \eqref{eq1.1} under assumptions \eqref{A1} and \eqref{A2} have attracted a lot of attention and recently witnessed considerable progress. On the one hand, conditions \eqref{A1} and \eqref{A2} appear naturally in a great number of models arising from physics, chemistry or biology.
On the other hand, showing global well-posedness under these two assumptions turns out to be challenging since it was shown in \cite{pierre2000blowup} that \eqref{A1} and \eqref{A2} alone are not enough to prevent blow-up in finite time.  To put the present paper into context, let us review related results in the literature. The global bounded solutions to \eqref{eq1.1} with \eqref{A1}, \eqref{A2} and linear intermeidate sum has been investigated already at the end of the eighties  using duality methods , see e.g. \cite{morgan1989global,morgan1990boundedness}. Because of the blow-up example in \cite{pierre2000blowup}, additional assumptions must be imposed. One typical condition is that the nonlinearities are bounded by polynomials, i.e. there exists $p\geq 1$ such that
    \begin{equation}\label{polynomial}
        |f_i(u)| \lesssim 1 + |u|^p, \quad \forall u\in \R_+^m, \; \forall i = 1,\ldots, m.
    \end{equation}
 By assuming the mass conservation condition
    \begin{equation}\label{mass-conservation}
        \sum_{i=1}^mf_i(u) = 0,
    \end{equation}
    (which is \eqref{A2} when the equality is fulfilled) and an
    entropy inequality
    \begin{equation}\label{entropy-inequality}
        \sum_{i=1}^mf_i(u)\log u_i \leq 0,
    \end{equation}
    it was shown in \cite{caputo2009global} by the De Giorgi method that \eqref{eq1.1} is globally well-posed for sub-quadratic nonlinearities, i.e. $p<2$ in \eqref{polynomial}. Still by De Giorgi method, \cite{goudon2010regularity} showed global existence under \eqref{mass-conservation} and \eqref{entropy-inequality} and either $n=1, p = 3$ or $n=2, p = 2$. This was later simplified in \cite{tang2018global} by using a modified Galiardo-Nirenberg inequality.
 Note that in all \cite{caputo2009global,goudon2010regularity,tang2018global}, the entropy inequality plays an important role in the proofs.  By proving an improved duality method,  \cite{canizo2014improved} showed global solutions to \eqref{eq1.1} requiring only \eqref{A1}, \eqref{A2} and quadratic growths in two dimensions, i.e. $n=2, p=2$. Moreover, general $p>2$ can be also treated providing the diffusion coefficients are quasi-uniform.
    Systems with quadratic growth in two dimensions have also been investigated in \cite{pierre2017dissipative}.
The case of quadratic intermediate sum condition in two dimensions has been studied recently in \cite{morgan2020boundedness}. It is remarked that requiring \eqref{A3} with $r=2$ is significantly weaker than requiring \eqref{polynomial} with $p=2$ since the former imposes only quadratic growth of {\it one} nonlinearity, and the rest satisfies a ``good'' cancellation rule, when the latter asks for quadratic bounds of {\it all} nonlinearities.  The case of quadratic growth in higher dimensions, i.e. $p=2$ and $n\geq 3$ has been resolved recently. More precisely, \cite{caputo2019solutions} assumed \eqref{mass-conservation}, \eqref{entropy-inequality}, \eqref{polynomial} with $p=2$ and treated the case of whole space $\R^n$. The work \cite{souplet2018global} still assumed the entropy inequality \eqref{entropy-inequality}, and weaken the mass conservation \eqref{mass-conservation} to mass dissipation \eqref{A2} treated both cases of whole space or bounded domains. The entropy dissipation assumption \eqref{entropy-inequality} has been removed completely in \cite{fellner2020global}, i.e. only \eqref{A1}, \eqref{A2}, and \eqref{polynomial} with $p=2$ are assumed. Moreover, solutions are shown to be bounded uniformly in time, see \cite{fellner2019uniform}.  It is worthwhile to mention that results in these works (partly) utilised ideas from an almost unnoticed paper \cite{kanel1990solvability} which treats \eqref{eq1.1} in the case of whole space $\R^n$ and mass conservation \eqref{mass-conservation}. The quadratic growth turns out to be optimal in higher dimension, see \cite{pierre2022examples}. Concerning higher growth rates of nonlinearities in low dimensions, in the recent paper \cite{sun2021regularity}, global existence was shown under \eqref{A1}, \eqref{A2} and cubic growth in one dimension $n=1, p=3$. If \eqref{A2} is replaced by the entropy inequality \eqref{entropy-inequality}, then cubic growth can be replaced by cubic intermediate sum, i.e. $r=3$ in \eqref{A3}. It is also remarked that the entropy inequality plays an important role when treating the cubic intermediate sum condition. There is also a rich literature dealing with weak solutions \cite{pierre2003weak,desvillettes2007global} or renormalized solutions \cite{fischer2015global,lankeit2021global}. We refer the interested reader to the extensive review paper \cite{pierre2010global} for more details.

\medskip
Having reviewed the existing works in the literature, our main result in Theorem \ref{thm:main} is the first result showing global existence and boundedness of solutions to \eqref{eq1.1} with the cubic intermediate sum condition and assuming only \eqref{A1} and \eqref{A2}. In comparison to the results in \cite{sun2021regularity}, we remove completely the entropy dissipation condition \eqref{entropy-inequality}. This seemingly marginal condition plays a crucial role in the analysis of \cite{sun2021regularity} (as it was also the case for previous works \cite{caputo2009global,goudon2010regularity,tang2018global,caputo2019solutions,souplet2018global}). Therefore, proving Theorem \ref{thm:main} requires new ideas, which we will detail in the next subsection.

\subsection{Key ideas}\label{subsec:keys_ideas}

%	{\LARGE{\textcolor{red}{\textbf{We first show that we can replace the mass dissipation by the mass conservation condition.}}}}
	
An immediate consequence of assumptions \eqref{A1} and \eqref{A2} is that the total mass is bounded for all time, i.e.
\begin{equation*}
    \|u_i\|_{L^{\infty}(0,T;L^1(\Omega))} \leq C(\text{initial data}), \quad  \forall T\in (0,T_{\max}), \forall i=1,\ldots, m,
\end{equation*}
where $(0,T_{\max})$ is the maximal existence interval. Utilising this, it was shown in \cite{morgan1989global,fitzgibbon2021reaction} that \eqref{eq1.1} is globally well-posed under \eqref{A3} of order
\begin{equation}\label{critical}
    r < 1 + \frac 2n
\end{equation}
where $n$ is the spatial dimension. This, interestingly, agrees with the Fujita exponent, see \cite{fujita1966blowing}. The results in \cite{morgan1989global,fitzgibbon2021reaction} therefore just fall short for cubic intermediate sum condition in one dimension. By using an improved duality method in \cite{canizo2014improved} one can also obtain a space-time estimate
\begin{equation*}
    \|u_i\|_{L^{2+\eps}(0,T;L^{2+\eps}(\Omega))} \leq C(T, \text{ initial data}), \quad \forall T\in (0,T_{\max}), \; \forall i=1,\ldots, m,
\end{equation*}
for some $\eps>0$. These estimates allow us to deal with \eqref{A3} of order
\begin{equation*}
    r \leq 1 + \frac{4}{n+2}
\end{equation*}
which is only better than \eqref{critical} when $n\geq 3$, see \cite{morgan2020boundedness}. 

\medskip
{\it Our first key idea} in proving Theorem \ref{thm:main} is to prove {\it a new uniform-in-time estimate} of the form
\begin{equation}\label{key_estimate}
    \|u_i\|_{L^{\infty}(0,T;\M^{1,\delta}(\Omega))} \leq C(\text{initial data}),
\end{equation}
$\forall T\in (0,T_{\max}), \; \forall i=1,\ldots, m$, for some $\delta>0$, where $\M^{1,\delta}(\Omega)$ denotes the Morrey space, see \eqref{def_Morrey}. The estimate \eqref{key_estimate} has sometimes been referred to as the {\it non-concentation phenomenon} which was studied extensively in the context of dispersive equations (see e.g. \cite{bourgain1999global,grillakis2000nonlinear,tao2006nonlinear}). Up to our knowledge, the present paper is the first work proving the estimate \eqref{key_estimate} for mass controlled systems \eqref{eq1.1}. To prove \eqref{key_estimate}, we exploit the recent result shown in \cite{sun2021regularity} that the auxiliary function $\Nx\bra{\int_0^t \sumi d_iu_i(x,s)ds}$ is H\"older continuous. Because of this, Theorem \ref{thm:main}, or more precisely the bound \eqref{key_estimate}, is restricted to the case of one dimension. However, we expect that such a bound is true in any dimension, which is left for future research.

\medskip
Having obtained \eqref{key_estimate},  {\it our second main idea} is to utilise a new version of modified Gagliardo-Nirenberg inequality involving Morrey spaces of the form: for any $\eps>0$, there exists $C(\eps)>0$ such that
\begin{equation}\label{key_estimate_2}
    \|f\|_{L^4(\Omega)}^4 \leq \eps\|f\|_{\M^{1,\delta}(\Omega)}^2\|\Nx f\|_{L^2(\Omega)}^2 + C(\eps)\|f\|_{L^1(\Omega)}^4.
\end{equation}
Note that this inequality is an alternative to the modified Gagliardo-Nirenberg inequality used in \cite{tang2018global,sun2021regularity} where the Morrey space was replaced by $\|u\log u\|_{L^1(\Omega)}$. The interpolation inequality \eqref{key_estimate_2} then allows us to apply the $L^p$-energy estimate developed in \cite{morgan2021global,fitzgibbon2021reaction} to show that
\begin{equation*}
    \sup_{i=1,\ldots, m}\|u_i\|_{L^{\infty}(0,T;L^2(\Omega))} \leq C(\text{initial data}), \quad \forall T\in (0,T_{\max}).
\end{equation*}
This is enough to apply the standard bootstrap procedure and obtain the global existence and boundedness of a unique classical solution to \eqref{eq1.1} in one dimension.

\newcommand{\Sone}{\text{BrO}_3^{-}}
\newcommand{\Stwo}{\text{Br}^{-}}
\newcommand{\Sthree}{\text{H}^{+}}
\newcommand{\Sfour}{\text{HBrO}_2}
\newcommand{\Sfive}{\text{HOBr}}
\newcommand{\Ssix}{\text{BrO}_2'}
\newcommand{\Sseven}{\text{H}_2\text{O}}
\newcommand{\Seight}{\text{Me}^{+}}
\newcommand{\Snine}{\text{Me}^{++}}
\newcommand{\Sten}{\text{Br}_2}
\newcommand{\Seleven}{\text{RH}}
\newcommand{\Stwelve}{\text{RBr}}
\newcommand{\Sthirteen}{\text{R}'}
\newcommand{\Sfourteen}{\text{ROH}}
\newcommand{\Sfifteen}{\text{Br}'}
\subsection{Application}\label{subsec:applications}
	Let us demonstrate an application of Theorem \ref{thm:main} to a realistic system to which all of existing works in the literature cited above, up to our knowledge, are not applicable. Consider the following set of $15$ reactions, which demonstrate the Belousov-Zhabotinsky reaction system (see \cite{fogler5th} and the associated Web modules)
	\begin{center}
	\begin{tabular}{p{8cm} l}
		1. $\quad \Sone + \Stwo + 2\Sthree \xrightarrow{k_1} \Sfour + \Sfive$ & \;\,9. $\quad \Sfive + \Stwo + \Sthree \xrightarrow{k_9} \Sten + \Sseven$ \\
		2. $\quad \Sfour + \Sfive \xrightarrow{k_2} \Sone + \Stwo + 2\Sthree$ &10. $\quad \Sten + \Sseven \xrightarrow{k_{10}} \Sfive + \Stwo + \Sthree$\\
		3. $\quad \Sfour + \Stwo + \Sthree \xrightarrow{k_3} 2\Sfive$ & 		11. $\quad \Seleven + \Sten \xrightarrow{k_{11}} \Stwelve + \Stwo + \Sthree$\\
		4. $\quad \Sone + \Sfour + \Sthree \xrightarrow{k_4} 2\Ssix + \Sseven$ & 		12. $\quad \Sfive + \Sthirteen \xrightarrow{k_{12}} \Sfourteen + \Sfifteen$\\
		5. $\quad 2\Ssix + \Sseven \xrightarrow{k_5} \Sone + \Sfour + \Sthree$ & 		13. $\quad \Seleven + \Sfifteen \xrightarrow{k_{13}} \Stwo + \Sthree + \Sthirteen$\\
		6. $\quad \Ssix + \Seight + \Sthree \xrightarrow{k_6} \Sfour + \Snine$ & 		14. $\quad \Seleven + \Snine \xrightarrow{k_{14}} \Seight + \Sthree + \Sthirteen$\\
		7. $\quad \Sfour + \Snine \xrightarrow{k_7} \Ssix + \Seight + \Sthree$ & 		15. $\quad 2\Sthirteen + \Sseven \xrightarrow{k_{15}} \Seleven + \Sfourteen$\\
		8. $\quad 2\Sfour \xrightarrow{k_8} \Sone + \Sfive + \Sthree$ & \
	\end{tabular}
	\end{center}
	where $\Seleven$ is malonic acid, $\Ssix, \Sthirteen, \Sfifteen$ are free radical intermediates, and $k_{i}, i=1,\ldots, 15$ are positive reaction rate constants. To write down the corresponding reaction-diffusion system, we denote by $u_i(x,t)$, $i=1,\ldots, 15$, the molar concentrations of $\Sone$, $\Stwo$, $\Sthree$, $\Sfour$, $\Sfive$, $\Ssix$, $\Sseven$, $\Seight$, $\Snine$, $\Sten$, $\Seleven$, $\Stwelve$, $\Sthirteen$, $\Sfourteen$, and $\Sfifteen$, respectively.
	% It is noted that the equations $u_{12}$ and $u_{14}$ are decoupled, since $\Stwelve$ and $\Sfourteen$ are products but not reactants.
    Concerning the global existence of solutions, we can assume that $k_i = 1$, $i=1,\ldots, 15$, without affecting the following analysis.  By applying the law of mass action for the reactions and Fickian's law for the diffusion, we obtain the following system of 15 equations 
    \begin{equation}\label{BZ}
    	\begin{cases}
    		\partial_t u_i - d_i\Delta u_i = f_i(u), &x\in \Omega, \; i\in \{1,\ldots, 15\},\\
    		\partial_xu_i(0,t) = \partial_xu_i(L,t) = 0, &i\in\{1,\ldots, 15\},\\
    		u_{i}(x,0) = u_{i,0}(x), &x\in\Omega,\; i\in\{1,\ldots, 15\},
    	\end{cases}
   	\end{equation}
   	where the nonlinearities are given by
	\begin{equation}\label{BZ_sys}
%		\left\{
		\begin{aligned}
			&f_1(u) = -(u_1u_2u_3^2 - u_4u_5) - (u_1u_3u_4 - u_6^2u_7) + u_4^2\\
			& f_2(u) = -(u_1u_2u_3^2 - u_4u_5) - u_2u_3u_4 - (u_2u_3u_5 - u_7u_{10}) + u_{10}u_{11} + u_{11}u_{15}\\
			& f_3(u) = -2(u_1u_2u_3^2 - u_4u_5) - u_2u_3u_4 - (u_1u_3u_4 - u_6^2u_7)\\
			&\qquad\qquad\;\;  - (u_3u_6u_8 - u_4u_9)  - (u_2u_3u_5 - u_7u_{10}) + u_4^2  + u_{10}u_{11} + u_{11}u_{15} + u_9u_{11}\\
			& f_4(u) = u_1u_2u_3^2 - u_4u_5 - u_2u_3u_4 - (u_1u_3u_4 - u_6^2u_7)  - (u_4u_9 - u_3u_6u_8) - 2u_4^2\\
			& f_5(u) = u_1u_2u_3^2 - u_4u_5 + 2u_2u_3u_4 + u_4^2   - (u_2u_3u_5 - u_7u_{10}) - u_5u_{13}\\
			& f_6(u) = 2(u_1u_3u_4 - u_6^2u_7) - (u_3u_6u_8 - u_4u_9)\\
			& f_7(u) = u_1u_3u_4 - u_6^2u_7 + (u_2u_3u_5 - u_7u_{10}) -u_7u_{13}^2\\
			& f_8(u) = -(u_3u_6u_8 -u_4u_9) +u_9u_{11}\\
			& f_9(u) = u_3u_6u_8 - u_4u_9 - u_{9}u_{11}\\
			& f_{10}(u) = u_2u_3u_5 - u_7u_{10} - u_{10}u_{11}\\
			& f_{11}(u) = -u_{10}u_{11} - u_{11}u_{15} - u_9u_{11} + u_7u_{13}^2\\
			& f_{12}(u) = u_{10}u_{11}\\
			& f_{13}(u) = -u_5u_{13} + u_{11}u_{15} + u_{9}u_{11} - 2u_7u_{13}^2\\
			& f_{14}(u) = u_7u_{13}^2 + u_{13}u_{15}\\
			& f_{15}(u) = u_5u_{13} - u_{11}u_{15}.
		\end{aligned}
%		\right.
	\end{equation}
	
	\medskip
	The assumption \eqref{A1} is obviously fulfilled as the nonlinearities are derived from the law of mass action. It can be directly checked that
	\begin{align*} 
		128f_1(u) + 80f_2(u) + f_3(u) + 113f_4(u) + 97f_5(u) + 112f_6(u) + 18f_7(u) + 104f_8(u) + 104f_9(u)\\
		 + 160f_{10}(u) + 104f_{11}(u) + 183f_{12}(u) + 103f_{13}(u) + 120f_{14}(u) + 80f_{15}(u) = 0
	\end{align*}
	where the coefficients are computed from the molar masses of chemical substances in the system. This means that a weighted version of the assumption \eqref{A2} is satisfied (see also Remark \ref{rem1}). To check \eqref{A3}, we notice that all nonlinearites have third or lower order of polynomials, except for $f_2(u)$, $f_3(u)$, $f_4(u)$ and $f_5(u)$. Among these, the fourth order terms in $f_2(u)$ and $f_3(u)$ have negative sign, and the positive fourth order terms  of $f_4(u)$ and $f_5(u)$ are cancelled by e.g. adding $f_3(u)$ and $f_4(u)$ or $f_5(u)$. In short, for all $u\in \mathbb{R}^{15}$,
	\begin{gather*} 
		f_i(u)\lesssim 1 + |u|^3, \quad \forall i=\{1,\ldots, 15\}\backslash\{4,5\},\\
		f_{3}(u) + f_4(u) \lesssim 1 + |u|^3, \qquad f_{3}(u) + f_5(u) \lesssim 1 + |u|^3.
	\end{gather*}
	Therefore, by taking the matrix $A = (a_{i,j})\in \mathbb{R}^{15\times 15}$ with $a_{3,4} = a_{3,5} = 1$ and $a_{i,j} = \delta_{ij}$, the Kronecker Delta, otherwise, we see that the assumption \eqref{A3} is satisfied with $r=3$, i.e. the cubic intermediate sum condition. Moreover, due to the third order reversible reaction (the fourth and fifth ones in the above fifteen reactions), one cannot expect a cubic intermediate sum condition of any lower order.
	 It is important to note that due to the oscillatory behaviour of Belousov-Zhabontinsky reactions \cite{fogler5th}, the entropy inequality \eqref{entropy-inequality} \textit{does not hold}, which makes the result in \cite{sun2021regularity} not applicable. Consequently, our Theorem \ref{thm:main} seems to be the only available result to date which shows global existence of solutions to \eqref{BZ}--\eqref{BZ_sys}.
	\begin{theorem}
		Let $\Omega = (0,L), L>0$. Then for any bounded, non-negative initial data $u_{i,0}\in L^{\infty}(\Omega), i=1,\ldots, 15$, there exists a unique global classical solution to \eqref{BZ}--\eqref{BZ_sys} which is bounded uniformly in time.
	\end{theorem}

\medskip
{\bf Organisation of the paper:} In the next section, we start with some preliminary results which will be needed for the sequel analysis. An easy but important observation is that the mass dissipation condition in \eqref{A2} can be replaced by a mass conservation assumption (see Proposition \ref{imply}). Section \ref{sec:Morrey} is devoted to proving the key estimate \eqref{key_estimate} and the interpolation inequality \eqref{key_estimate_2}. The proof of Theorem \ref{thm:main} (or equivalently, Theorem \ref{thm:mass_conservation}) is presented in Section \ref{sec:second_thm}. The last section is devoted to the proofs of Theorem \ref{thm:mass_control} and \ref{thm:super_cubic}.

\section{Preliminaries}
\subsection{Mass dissipation and Mass conservation}
We first define what we mean a classical solution of \eqref{eq1.1}.
\begin{definition}[Classical  solutions]
	Let $0<T\leq\infty$. A  classical solutions to \eqref{eq1.1} on $(0,T)$ is a value of concentrations $u=(u_1,\cdots,u_m),m\geq1$, satisfying for all $i=1,\cdots,m$, $u_i\in C((0,T);L^p(\Omega))\cap L^\infty((0,T)\times\Omega)\cap C^{1,2}((\tau,T)\times \overline{\Omega})$ for all $p>1$ and all $0<\tau<T$, and u solves \eqref{eq1.1}   a.e. in $\Omega\times(0,T)$.
\end{definition}

The following local existence of \eqref{eq1.1} is classical, see e.g. \cite{morgan1989global}.
\begin{theorem}[Local existence]\label{local}
	Assume \eqref{A1}. Then, for any  bounded, nonnegative initial data, \eqref{eq1.1} possesses a local nonnegative  classical  solution on a maximal interval $[0,T_{\max})$. Moreover, if
	\begin{equation}\label{eq-blow-up}
	\begin{split}
	\limsup_{t\nearrow T_{\max}}\|u_i(t)\|_{L^\infty(\Omega)}<\infty \quad \text{for all}~i=1,2,\cdots,m,
	\end{split}
	\end{equation}
	then $T_{\max}=+\infty.$
	\end{theorem}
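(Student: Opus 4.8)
The plan is to set up a standard fixed-point argument for the semilinear parabolic system in a space of bounded continuous functions, and then extract the blow-up alternative from the local-in-time bound obtained in the fixed-point step. First I would recall that each scalar operator $\partial_t - d_i\partial_{xx}$ with homogeneous Neumann boundary conditions on $\Omega=(0,L)$ generates an analytic semigroup $(e^{t d_i \Delta_N})_{t\ge 0}$ on $X:=C(\overline\Omega)$ (equivalently on $L^\infty(\Omega)$), which is positivity-preserving and a contraction for the sup-norm. Writing $u=(u_1,\dots,u_m)$, the system \eqref{eq1.1} is then recast as the integral (mild) formulation
\begin{equation*}
	u_i(t) = e^{t d_i\Delta_N}u_{i,0} + \int_0^t e^{(t-s)d_i\Delta_N} f_i(\cdot,s,u(s))\,ds, \quad i=1,\dots,m.
\end{equation*}
Because of assumption \eqref{A1}, the map $u\mapsto (f_i(\cdot,t,u))_i$ is locally Lipschitz from $X^m$ into $X^m$, uniformly in $t$; hence on a ball $B_R \subset C([0,T_0];X^m)$ around the constant function $u_0$, for $T_0=T_0(R,\|u_0\|_{L^\infty})$ small enough, the right-hand side defines a contraction. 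Banach's fixed point theorem yields a unique mild solution on $[0,T_0]$.

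Next I would upgrade the mild solution to a classical one: parabolic smoothing for the analytic semigroup gives $u_i\in C^{1,2}((\tau,T_0)\times\overline\Omega)$ for every $\tau>0$ via a standard bootstrap (the Duhamel term gains H\"older regularity once the data are merely bounded, then Schauder estimates promote it to $C^{1,2}$), and $u_i\in C([0,T_0];L^p(\Omega))$ for all $p$ follows from $u_i\in C([0,T_0];X)$ and boundedness of $\Omega$. Nonnegativity is obtained from assumption \eqref{A1}: the quasi-positivity of $f_i$ together with the positivity-preserving property of $e^{t d_i\Delta_N}$ makes the cone $\{u\ge 0\}$ invariant — concretely, one runs the fixed-point iteration starting from the nonnegative initial guess and checks inductively (using quasi-positivity, e.g. by adding a large linear term $\lambda u_i$ to both sides to make the nonlinearity nondecreasing in $u_i$ near $u_i=0$) that each iterate stays nonnegative, so the limit is nonnegative; alternatively, a scalar comparison/maximum-principle argument applied componentwise gives the same conclusion. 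Finally, the solution extends to a maximal interval $[0,T_{\max})$ by the usual continuation argument: restarting the local existence theorem from any time $t_0<T_{\max}$ with initial data $u(t_0)$, the existence time depends only on $\|u(t_0)\|_{L^\infty}$.

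For the blow-up alternative, suppose \eqref{eq-blow-up} holds, i.e. $\limsup_{t\nearrow T_{\max}}\|u_i(t)\|_{L^\infty(\Omega)}<\infty$ for all $i$; then $\sup_{t<T_{\max}}\|u(t)\|_{L^\infty}=:M<\infty$, so the local existence time $T_0(M)$ from the fixed-point step is bounded below by a positive constant independent of the starting time. Restarting from $t_0 = T_{\max} - T_0(M)/2$ produces a solution on $[t_0, t_0+T_0(M)]$, which strictly extends beyond $T_{\max}$, contradicting maximality; hence $T_{\max}=+\infty$. The only mildly delicate point — and the part I would expect to require the most care — is making the nonnegativity argument rigorous for a \emph{system} with merely quasi-positive (not cooperative) nonlinearities: one cannot invoke the scalar maximum principle directly on each equation because $f_i$ depends on the other components, so the cleanest route is the invariant-region / iteration argument sketched above, or a truncation argument replacing $f_i(x,t,u)$ by $f_i(x,t,u^+)$, solving, and then showing $u=u^+$ a posteriori. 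All remaining estimates are routine and can be cited from \cite{morgan1989global}.
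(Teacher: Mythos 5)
The paper never proves Theorem \ref{local}: it is quoted as classical and delegated to \cite{morgan1989global}, so there is no in-house argument to compare against. Your proposal is, in substance, exactly the standard proof behind that citation: the Duhamel (mild) formulation with the Neumann heat semigroups, a contraction on a ball of $C([0,T_0];L^\infty(\Omega)^m)$ using the locally Lipschitz hypothesis in \eqref{A1}, parabolic smoothing to upgrade to a classical solution, nonnegativity from quasi-positivity, and the blow-up alternative \eqref{eq-blow-up} via the usual continuation argument, since the local existence time depends only on $\|u(t_0)\|_{L^\infty(\Omega)}$. This is correct and is the argument the paper implicitly relies on.

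Two points would need explicit care in a full write-up. First, \eqref{A1} defines $f_i$ only on $\Omega\times\R_+\times\R_+^m$, so the fixed-point map must be run with a truncated nonlinearity such as $\tilde f_i(x,t,u)=f_i(x,t,u_1^+,\ldots,u_m^+)$; nonnegativity then follows, e.g., by multiplying the $i$-th equation by the negative part of $u_i$ and integrating, because quasi-positivity gives $\tilde f_i\ge 0$ on the set where $u_i<0$, and a posteriori $u=u^+$ solves the original system. You correctly identify this as the delicate step, and your truncation (or the $\lambda u_i$-shift iteration) is the right fix; a naive componentwise scalar maximum principle would indeed not suffice. Second, to reach the regularity $C^{1,2}((\tau,T)\times\overline\Omega)$ demanded by the paper's definition of classical solution, the Schauder bootstrap needs some continuity of $f_i$ in $(x,t)$ beyond the literal wording of \eqref{A1}; this is implicit in the framework of \cite{morgan1989global} and is a presentational caveat rather than a gap in your argument. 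With these provisos, your proof is sound and essentially coincides with the classical one the paper cites.
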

	\begin{remark}
		It is noted that the local Lipschitz continuity of the nonlinearities is enough to get local existence. Theorem \ref{local} assumes \eqref{A1} to ensure that if the initial data is non-negative, so is the solution. This is the consequence of the quasi-positivity property.
	\end{remark}

	Thanks to the criterion \eqref{eq-blow-up} in Theorem \ref{local}, the global existence of classical  solutions to \eqref{eq1.1} follows if we can show that
	\begin{equation}\label{criterion}
	\sup_{i}\limsup_{t\nearrow T_{\max}}\|u_i(t)\|_{\LO{\infty}} < +\infty.
	\end{equation}
	
	To prove Theorem \ref{thm:main} we first consider a (seemingly) stronger condition than \eqref{A2},
    \begin{enumerate}[label=(A\theenumi'),ref=A\theenumi']
       \setcounter{enumi}{1}
    	\item\label{A2'} assume that
    	\begin{equation*}
    	  \sum^m_{i=1}f_i(x,t,u) = 0,\qquad \forall u\in \R_+^m, \; \forall (x,t)\in \Omega\times \R_+.
    	\end{equation*}
    \end{enumerate}

    Under \eqref{A2'}, we prove
    \begin{theorem}\label{thm:mass_conservation}
    	Assume \eqref{A1}, \eqref{A2'}, \eqref{A3} with $r = 3$, and \eqref{A4}. Then for any non-negative bounded initial data $u_0 \in L^\infty_+(\Omega)^m$ there is a unique global classical solution to \eqref{eq1.1} which is bounded uniformly in time, i.e.
    	\begin{equation*}
    		\sup_{t>0}\sup_{i=1,\ldots, m}\|u_i(t)\|_{L^{\infty}(\Omega)} < +\infty.
    	\end{equation*}
    \end{theorem}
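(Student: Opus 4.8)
The plan is to establish the a priori $L^\infty$-bound which, by the blow-up criterion \eqref{criterion} (equivalently \eqref{eq-blow-up}), upgrades the local classical solution of Theorem \ref{local} to a global one; uniqueness then follows in the usual way from the local Lipschitz assumption in \eqref{A1}. Throughout, $u=(u_1,\dots,u_m)$ denotes the nonnegative solution on $[0,T_{\max})$. As a preliminary, summing the equations of \eqref{eq1.1}, integrating over $\Omega$, and using the no-flux conditions together with \eqref{A2'} shows that $t\mapsto\sum_i\int_\Omega u_i\,dx$ is constant, hence $\sum_i\|u_i(t)\|_{L^1(\Omega)}\equiv\sum_i\|u_{i,0}\|_{L^1}=:\mathcal M_0$ for all $t\in[0,T_{\max})$.

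\emph{Step 1: the non-concentration (Morrey) bound.} Set $U(x,t):=\int_0^t\sum_i d_iu_i(x,s)\,ds$ and $w:=\partial_xU$. Time-integrating the summed equation and using $\sum_if_i\equiv0$ from \eqref{A2'} yields $\partial_{xx}U(\cdot,t)=\sum_iu_i(\cdot,t)-\sum_iu_{i,0}$; together with the Neumann condition $w(0,t)=0$ this gives $w(x,t)=\int_0^x\big(\sum_iu_i(y,t)-\sum_iu_{i,0}(y)\big)dy$, so $\|w(\cdot,t)\|_{L^\infty(\Omega)}\le2\mathcal M_0$ for all $t$. Moreover, writing $a(x,t):=\big(\sum_i d_iu_i\big)\big/\big(\sum_iu_i\big)\in[\min_id_i,\max_id_i]$, and using $\partial_tU=\sum_id_iu_i=a\,\partial_xw+a\sum_iu_{i,0}$, one checks that $w$ is a bounded weak solution of the divergence-form equation
\[
\partial_tw=\partial_x\!\big(a\,\partial_xw\big)+\partial_x\!\Big(a\sum_iu_{i,0}\Big),\qquad w|_{\partial\Omega}=0,\qquad w(\cdot,0)=0,
\]
with uniformly elliptic, bounded measurable coefficient and $L^\infty$ forcing in divergence form. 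By the De Giorgi--Nash estimate — this is the one-dimensional H\"older regularity of \cite{sun2021regularity}, and since $\|w\|_{L^\infty}$ and $a$ are bounded uniformly in time the H\"older seminorm is as well — one gets $\|w(\cdot,t)\|_{C^{0,\gamma}(\overline\Omega)}\le K$ for some $\gamma\in(0,1)$, uniformly in $t$. Hence, for every interval $I\subset\Omega$, using $u_i\ge0$ and that the oscillation of $w(\cdot,t)$ over $I$ is at most $K|I|^\gamma$,
\[
\int_I\sum_iu_i(y,t)\,dy\le K|I|^\gamma+\big\|{\textstyle\sum_i}u_{i,0}\big\|_{L^\infty(\Omega)}|I|,
\]
which, combined with the $L^1$-bound for $|I|$ of order one, gives $\sup_t\|u_i(t)\|_{\M^{1,\delta}(\Omega)}\le C$ for a suitable $\delta>0$, i.e. \eqref{key_estimate}. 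This is the place where being in dimension one is essential: for $n\ge2$, $\nabla U$ is only a weakly singular potential of an $L^1$ density and need not be bounded.

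\emph{Steps 2--3: from the Morrey bound to an $L^\infty$-bound.} Next I would invoke the $L^p$-energy method of \cite{morgan2021global,fitzgibbon2021reaction}. This method already incorporates the devices needed to handle the distinct diffusion coefficients and the lower-triangular cancellation structure of \eqref{A3} — for instance, the splitting $\sum_{j\le i}a_{ij}d_ju_j=d_i\sum_{j\le i}a_{ij}u_j+\sum_{j<i}a_{ij}(d_j-d_i)u_j$, whose correction term only involves lower-indexed components — and it reduces the boundedness question to supplying a sufficiently strong a priori bound on $u$. With \eqref{A3} at $r=3$, its nonlinear estimates leave a single supercritical term of the form $\int_\Omega\big(1+\sum_iu_i\big)^4dx$. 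It is precisely here that \eqref{key_estimate} and the modified Gagliardo--Nirenberg inequality \eqref{key_estimate_2} come in: applying \eqref{key_estimate_2} to $f=1+\sum_iu_i$ and using the uniform Morrey bound of Step 1 — in place of the $\|u\log u\|_{L^1}$ bound that the entropy inequality of earlier work would supply — together with $\|1+\sum_iu_i\|_{L^1}\le|\Omega|+\mathcal M_0$, this term is absorbed into the dissipation modulo an additive constant, upon choosing $\eps$ small. A Gronwall argument, using the $L^1$-bound and Poincar\'e--Wirtinger to restore coercivity, then yields $\sup_t\|u_i(t)\|_{L^2(\Omega)}\le C$ together with $\sup_t\int_t^{t+1}\|\partial_xu_i(s)\|_{L^2(\Omega)}^2ds\le C$; a one-dimensional parabolic interpolation promotes this to a uniform $L^6$-bound on unit time-slabs, and continuing the iteration of \cite{morgan2021global,fitzgibbon2021reaction} (with the polynomial bound \eqref{A4} keeping the nonlinearities in the Lebesgue spaces needed at each stage) raises the integrability of $u_i$ to $\sup_t\sup_i\|u_i(t)\|_{L^\infty(\Omega)}<\infty$. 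By \eqref{criterion}, $T_{\max}=+\infty$, which proves Theorem \ref{thm:mass_conservation}.

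\emph{Main obstacle.} The genuinely new step is Step 1: the non-concentration bound \eqref{key_estimate} replaces the entropy inequality relied on in all previous treatments of the cubic regime, and — even granting the H\"older regularity of \cite{sun2021regularity} — the delicate part is to arrange things so that the H\"older bound on $w=\partial_xU$, hence \eqref{key_estimate}, holds uniformly in time; this hinges on the uniform bound $\|w(\cdot,t)\|_{L^\infty}\le2\mathcal M_0$ coming from mass conservation. A secondary, more technical difficulty — handled by the machinery of \cite{morgan2021global,fitzgibbon2021reaction} — is to carry out the energy estimates so that the only supercritical term left by the intermediate-sum cancellation has exactly the single-function shape $\int_\Omega(1+\sum_iu_i)^4$ required by \eqref{key_estimate_2}.
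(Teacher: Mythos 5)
Your proposal is correct in substance and follows the paper's overall strategy: a uniform-in-time non-concentration bound \eqref{key_estimate}, the Morrey--Gagliardo--Nirenberg interpolation \eqref{key_estimate_2} (Proposition \ref{pro:interp_Morrey}), and the $L^p$-energy functional of Lemma \ref{lem:energy} with $p=2$, followed by the bootstrap to $L^\infty$ (which the paper simply delegates to \cite[Theorem 1.3]{fitzgibbon2021reaction} once the uniform $L^\infty(0,T;L^2)$ bound is in hand -- you may as well cite it rather than rerun the iteration). Where you genuinely diverge is in the proof of the Morrey bound itself (Proposition \ref{pro:Morrey_bound}). The paper tests the summed equation against rescaled cut-offs $\varphi_{\eps,x_0}$ and splits into a short-time regime (Lemma \ref{small_time}) and a long-time regime (Lemma \ref{large_time}), the latter using a time-averaging trick on the window $[T-\eps^{1/(1+2\gamma)},T]$ together with the $L^2$-duality estimate of Lemma \ref{lem:duality}; the space--time H\"older continuity of $Y$ (Lemma \ref{lem:Holder}) is used only on these short windows, so the constant is never required to be uniform over long time intervals. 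You instead integrate the summed equation twice to get the representation $Y(x,t)=\int_0^x\bigl(\sum_i u_i(y,t)-\sum_i u_{i,0}(y)\bigr)dy$, which gives $\|Y\|_{L^\infty}\le 2\mathcal M_0$ uniformly in $t$ under \eqref{A2'}, derive the divergence-form equation $\partial_t Y=\partial_x(a\,\partial_x Y)+\partial_x(a\sum_i u_{i,0})$ with $a\in[\min_i d_i,\max_i d_i]$, and then read the Morrey bound off a \emph{uniform-in-time spatial} H\"older estimate for $Y$ via the oscillation inequality $\int_I z(t)\le \|z_0\|_{L^\infty}|I|+K|I|^{\gamma}$. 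This is cleaner and bypasses Lemmas \ref{small_time}, \ref{large_time} and \ref{lem:duality} entirely (it even yields the better exponent $\delta=\gamma$ rather than $\gamma/(1+2\gamma)$), but it leans on one point you should make explicit: the H\"older result you quote from \cite{sun2021regularity} is stated, as in Lemma \ref{lem:Holder}, with a constant $\mathscr C(T-\tau)$ allowed to depend on the length of the time window, and the paper's windowed argument is designed precisely so that no time-uniform H\"older seminorm is ever needed. Your claim is nevertheless justified, because the dependence is only through $\|Y\|_{L^\infty(\Omega\times(\tau,T))}$, the ellipticity bounds on $a$, and $\|a\sum_i u_{i,0}\|_{L^\infty}$ (De Giorgi--Nash constants are local in time, and $Y(\cdot,0)=0$ with $Y=0$ on $\partial\Omega$ by mass conservation), all of which you bound uniformly; but you should either spell out this localization-in-time argument or fall back on the paper's short-window device. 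The remaining steps (absorbing the single quartic term into the dissipation after choosing $\eps$ small, Gronwall with the $L^1$ bound, bootstrap under \eqref{A4}) coincide with Section \ref{sec:second_thm}.
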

	The proof of Theorem \ref{thm:mass_conservation} is presented in Section \ref{sec:second_thm}. At the first glance, \eqref{A2'} is stronger than \eqref{A2}. However, the following result shows that Theorem \ref{thm:main} can be implied from Theorem \ref{thm:mass_conservation} by adding a suitable additional equation to the system. This idea has been used before in \cite{fellner2020global,sun2021regularity}.
	\begin{proposition}\label{imply}
		Theorem \ref{thm:mass_conservation} implies  Theorem \ref{thm:main}.
	\end{proposition}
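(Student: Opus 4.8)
The plan is to augment the system \eqref{eq1.1} with one extra component whose job is to absorb the mass defect, turning the mass dissipation inequality \eqref{A2} into a genuine conservation law \eqref{A2'}, and then to check that all the structural hypotheses \eqref{A1}, \eqref{A3} (with $r=3$) and \eqref{A4} are preserved for the enlarged system so that Theorem \ref{thm:mass_conservation} applies. Concretely, given nonlinearities $f_1,\ldots,f_m$ satisfying \eqref{A1}--\eqref{A4} with $r\le 3$, define
\[
  g(x,t,u) := -\sum_{i=1}^m f_i(x,t,u) \ge 0 \qquad (u\in\R_+^m),
\]
which is non-negative by \eqref{A2} and locally Lipschitz by \eqref{A1}. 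Introduce a new unknown $u_{m+1}$ with an arbitrary diffusion coefficient $d_{m+1}>0$, homogeneous Neumann boundary condition, and zero (or any bounded non-negative) initial datum, governed by $\partial_t u_{m+1}-d_{m+1}\partial_{xx}u_{m+1}=\widetilde f_{m+1}(x,t,u_1,\ldots,u_{m+1}):=g(x,t,u_1,\ldots,u_m)$, while keeping $\widetilde f_i:=f_i$ for $i\le m$ (extended trivially in the extra variable). The augmented system for $\widetilde u=(u_1,\ldots,u_m,u_{m+1})$ now satisfies $\sum_{i=1}^{m+1}\widetilde f_i = \sum_{i=1}^m f_i + g = 0$, i.e. \eqref{A2'}.

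Next I would verify the remaining assumptions for the $(m+1)$-component system. For \eqref{A1}: each $\widetilde f_i$ ($i\le m$) is locally Lipschitz and quasi-positive as before (adding a variable that does not appear changes nothing), and $\widetilde f_{m+1}=g$ is locally Lipschitz and $\ge 0$ everywhere on $\R_+^{m+1}$, in particular when $u_{m+1}=0$, so quasi-positivity holds for the new component too. For \eqref{A3} with $r=3$: take the lower-triangular matrix $\widetilde A\in\R^{(m+1)\times(m+1)}$ which is $A$ in the top-left $m\times m$ block, has $\widetilde a_{m+1,m+1}=1$, and $\widetilde a_{m+1,j}=0$ for $j\le m$ (it remains lower triangular with positive diagonal and non-negative entries). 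For $i\le m$ the intermediate sum condition is unchanged; for $i=m+1$ we have $\sum_{j=1}^{m+1}\widetilde a_{m+1,j}\widetilde f_j = \widetilde f_{m+1}=g=-\sum_{j=1}^m f_j$. This is where I would use that a \emph{one-sided} cubic bound on $g$ suffices: by \eqref{A4}, $f_j(x,t,u)\le C(1+\sum_k u_k^\ell)$, hence $g=-\sum_j f_j \le mC(1+\sum_k u_k^\ell)$ — but $\ell$ may be huge, so this crude bound is not cubic. The remedy is that the \emph{first} row of $A$ already gives, using $a_{11}>0$, a cubic upper bound on $f_1$; iterating down the triangular structure of $A$ (exactly the standard argument that \eqref{A3} yields a cubic upper bound on each partial sum $\sum_{j=1}^i f_j$, cf. the second bullet of the Remark after Theorem \ref{thm:main}), one obtains $\sum_{j=1}^m f_j \ge -C(1+\sum_k u_k)^3$, i.e. $g\le C(1+\sum_{k=1}^{m+1}u_k)^3$, which is precisely \eqref{A3} for the index $i=m+1$ with $r=3$. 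Finally \eqref{A4} for the augmented system: the components $i\le m$ are unchanged, and $\widetilde f_{m+1}=g\le mC(1+\sum_{k=1}^{m}u_k^\ell)\le mC(1+\sum_{k=1}^{m+1}u_k^\ell)$, so a polynomial upper bound (with the same exponent $\ell$, or $\max(\ell,1)$) holds.

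With all hypotheses checked, Theorem \ref{thm:mass_conservation} applies to the $(m+1)$-component system and yields a unique global classical solution $\widetilde u=(u_1,\ldots,u_m,u_{m+1})$, bounded uniformly in time. The first $m$ components $(u_1,\ldots,u_m)$ then solve the original system \eqref{eq1.1} with the original data — here one uses uniqueness for \eqref{eq1.1} itself (which follows from \eqref{A1} and the local existence Theorem \ref{local}) to identify this restriction with \emph{the} solution of \eqref{eq1.1} — and inherit the uniform-in-time $L^\infty$ bound $\sup_{t>0}\max_{i\le m}\|u_i(t)\|_{L^\infty(\Omega)}\le \sup_{t>0}\max_{i\le m+1}\|u_i(t)\|_{L^\infty(\Omega)}<\infty$. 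This is exactly the conclusion of Theorem \ref{thm:main}. I expect the only genuinely non-routine point to be the verification of the cubic intermediate sum condition for the new component, i.e. turning the one-sided bound \eqref{A4} (with arbitrary $\ell$) into a cubic \emph{upper} bound on $g$; this is forced to go through the triangular cancellation structure of $A$ rather than through the naive polynomial estimate, and it is the reason the construction respects $r\le 3$ rather than degrading the growth exponent.
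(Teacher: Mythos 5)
Your construction is exactly the paper's: append a component $u_{m+1}$ with nonlinearity $g=-\sum_{i=1}^m f_i$ so that the enlarged system satisfies \eqref{A2'}, then invoke Theorem \ref{thm:mass_conservation} and restrict to the first $m$ components. The genuine gap is in your verification of \eqref{A3} for the new row. With your choice of last row $(0,\ldots,0,1)$ you must bound $g=-\sum_{j=1}^m f_j$ from above by $C(1+\sum_k u_k)^3$, and you claim this follows by ``iterating down the triangular structure of $A$'', i.e.\ that \eqref{A1}--\eqref{A4} imply $\sum_{j=1}^m f_j\ge -C(1+\sum_k u_k)^3$. They do not: \eqref{A3} only provides \emph{upper} bounds on the weighted partial sums $\sum_{j\le i}a_{ij}f_j$ and yields no lower bound on $\sum_j f_j$ at all. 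Already for $m=1$ the choice $f_1(u_1)=-u_1^{10}$ satisfies \eqref{A1}--\eqref{A4} (even with $r=1$), yet $g=u_1^{10}$ admits no cubic bound, so the row-$(m+1)$ inequality fails for your matrix. Your appeal to the second bullet of the Remark after Theorem \ref{thm:main} is a misreading: that remark says the $i=m$ inequality of \eqref{A3} can be obtained from \eqref{A2} by taking unit weights (since $\sum_j f_j\le 0$); it is not a lower bound on $\sum_j f_j$. The repair is immediate, and is what makes the paper's ``easy to check'' work: take the last row of $\widetilde A$ to be $(1,1,\ldots,1)$ (still lower triangular, nonnegative, positive diagonal), so that the $(m+1)$-st intermediate sum is $\sum_{j=1}^m g_j+g_{m+1}=\sum_{j=1}^m f_j-\sum_{j=1}^m f_j=0\le C(1+\sum_k y_k)^3$ by cancellation; no bound on $g$ alone is needed.

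A secondary slip: in checking \eqref{A4} for the new component you write $g=-\sum_j f_j\le mC(1+\sum_k u_k^\ell)$ ``by \eqref{A4}'', but \eqref{A4} bounds each $f_j$ from \emph{above} and hence bounds $-f_j$ only from below; as stated it gives no polynomial upper bound on $g$ (for $f_1(u_1)=1-e^{u_1}$, which satisfies \eqref{A1}--\eqref{A4}, there is none). A correct justification of \eqref{A4} for $g$ requires a polynomial \emph{lower} bound on the $f_j$ (equivalently, reading \eqref{A4} as a two-sided growth bound $|f_i|\le C(1+\sum_j u_j^\ell)$), a point the paper's own one-line verification also relies on implicitly. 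The remaining steps of your argument (quasi-positivity of $g$, rows $i\le m$ of \eqref{A3}, and the identification of the restricted solution with the solution of \eqref{eq1.1} via uniqueness) are fine and coincide with the paper's proof.
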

	\begin{proof}	
		Consider system \eqref{eq1.1} with assumptions \eqref{A1}--\eqref{A4}, and let $u = (u_1, \ldots, u_m)$ be the local classical solution obtained in Theorem \ref{local}. Let $u_{m+1}$ be the solution to the parabolic equation on $\Omega \times [0,T_{\max})$
		\begin{equation*}
			\begin{cases}
				\partial_t u_{m+1} - \partial_{xx} u_{m+1} = -\sum_{i=1}^{m}f_i(u), &x\in\Omega, t\in (0,T_{\max}),\\
				\pa_{x}u_{m+1}(0,t) = \pa_{x}u_{m+1}(L,t) = 0, &t\in (0,T_{\max}),\\
				u_{m+1}(x,0) = 0, &x\in\Omega.
			\end{cases}
		\end{equation*}
		Now define 
		\begin{equation*}
			y_j = u_j, \; j=1,\ldots, m+1, \quad y = (y_j)_{j=1,\ldots, m+1}, \quad y_{j,0} = u_{i,0},\; i=1,\ldots, m, \; y_{m+1,0} = 0,
		\end{equation*}
		\begin{equation*}
			g_j(y) = f_j(u), \; i=1,\ldots, m, \quad g_{m+1}(y) = -\sum_{j=1}^{m}f_j(u).
		\end{equation*}
		Then $y$ solves the system
		\begin{equation}\label{new_sys}
			\begin{cases}
				\partial_t y_j - d_j\pa_{xx}y_j = g_j(y), &x\in\Omega, t\in (0,T_{\max}),\\
				\pa_{x}y_{j}(0,t) = \pa_{x}y_{j}(L,t) = 0, &t\in (0,T_{\max}),\\
				y_{j}(x,0) = y_{j,0}(x), &x\in\Omega,
			\end{cases}
		\end{equation}
		for all $j=1,\ldots, m+1$, where $d_{m+1} = 1$. It is easy to check that the nonlinearities $g_j$, $j=1,\ldots, m+1$ satisfy the assumptions \eqref{A1}, \eqref{A2'}, \eqref{A3}, and \eqref{A4}. Thus Theorem \ref{thm:mass_conservation} is applicable and we obtain the unique global classical, uniform-in-time bounded solution $y = (y_1, \ldots, y_{m+1})$ to \eqref{new_sys} with 
		\begin{equation*}
			\limsup_{t\to \infty}\|y_j(t)\|_{L^\infty(\Omega)} < +\infty.
		\end{equation*}
		This implies the global existence, uniqueness and uniform-in-time boundedness of solution to \eqref{eq1.1}, which finishes our proof.
	\end{proof}
	
	Because of Proposition \ref{imply}, we only need to prove Theorem \ref{thm:mass_conservation}. Therefore, \textbf{we will assume \eqref{A2'} for the rest of this paper} except for Section \ref{sec:extensions}.
	
	\subsection{Auxiliary lemmas}
	We will need the following lemmas for our sequel analysis. The first one is an $L^2$-duality estimate. It is noted that this result holds true in any dimension.
	\begin{lemma}\cite{pierre2000blowup,fellner2019uniform}\label{lem:duality}
		Assume \eqref{A1}--\eqref{A2}. Let $u$ be the local solution to \eqref{eq1.1}. Then it holds for any $0\le \tau\le T\in (0,T_{\max})$,
		\begin{equation*}
			\|u_i\|_{L^2(\tau,T;L^2(\Omega))} \leq C(T-\tau,\|u_{i,0}\|_{L^2(\Omega)}), \quad \forall i=1,\ldots, m
		\end{equation*}
		where $C(T-\tau)$ depends polynomially on $T-\tau$.
	\end{lemma}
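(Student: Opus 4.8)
The plan is to collapse the system into a single scalar differential inequality and then run the classical Pierre--Schmitt duality argument. Put $S:=\sum_{i=1}^m u_i\ge 0$ and $D:=\sum_{i=1}^m d_iu_i$. Summing the equations of \eqref{eq1.1} and using \eqref{A2} gives
\[
\partial_t S-\partial_{xx}D\le 0 \quad\text{in }\Omega\times(0,T_{\max}),\qquad \partial_x S=\partial_x D=0 \text{ on }\partial\Omega,
\]
with $S(\cdot,0)=\sum_i u_{i,0}$. Since all $u_i\ge 0$ we have $\underline d\,S\le D\le\overline d\,S$ where $\underline d:=\min_i d_i>0$, $\overline d:=\max_i d_i$; setting $a:=D/S$ on $\{S>0\}$ and $a:=1$ on $\{S=0\}$ (where $D=0$ too) yields a measurable coefficient $a(x,t)\in[\underline d,\overline d]$ with $aS=D$, so that $\partial_t S-\partial_{xx}(aS)\le 0$. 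Because $0\le u_i\le S$ pointwise, it suffices to estimate $\|S\|_{L^2((\tau,T)\times\Omega)}$. Finally, integrating the $S$-inequality over $\Omega$ shows $t\mapsto\|S(t)\|_{L^1(\Omega)}$ is non-increasing, hence $\|S(t)\|_{L^1(\Omega)}\le\|S(0)\|_{L^1(\Omega)}\le|\Omega|^{1/2}\sum_i\|u_{i,0}\|_{L^2(\Omega)}$ for all $t$.

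Next I would set up the dual problem. Fix $0\le\tau\le T<T_{\max}$ and an arbitrary $0\le\theta\in C_c^\infty((\tau,T)\times\Omega)$, and let $\phi$ solve the backward non-divergence problem $-\partial_t\phi-a\,\partial_{xx}\phi=\theta$ on $(\tau,T)\times\Omega$ with $\partial_x\phi=0$ on $\partial\Omega$ and $\phi(\cdot,T)=0$; by the parabolic maximum principle $\phi\ge 0$. Testing $\partial_t S-\partial_{xx}(aS)\le0$ against $\phi$ and integrating by parts (the boundary terms vanish by the Neumann conditions) gives
\[
\iint_{(\tau,T)\times\Omega}S\,\theta\;\le\;\int_\Omega S(\cdot,\tau)\,\phi(\cdot,\tau).
\]
Everything now reduces to estimating $\phi(\cdot,\tau)$ in terms of $\|\theta\|_{L^2}$. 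Multiplying the dual equation by $\phi$ and integrating, using $\int a|\partial_x\phi|^2\ge 0$, a Gronwall-type argument yields $\|\phi(\cdot,\tau)\|_{L^2(\Omega)}\le\sqrt{T-\tau}\,\|\theta\|_{L^2((\tau,T)\times\Omega)}$. Multiplying instead by $-\partial_{xx}\phi$, using $\int_\Omega\partial_t\phi\,\partial_{xx}\phi=-\tfrac12\tfrac{d}{dt}\|\partial_x\phi\|_{L^2}^2$, $\int a|\partial_{xx}\phi|^2\ge\underline d\|\partial_{xx}\phi\|^2$, Cauchy--Schwarz and $\partial_x\phi(\cdot,T)=0$, gives $\|\partial_x\phi(\cdot,\tau)\|_{L^2(\Omega)}+\|\partial_{xx}\phi\|_{L^2((\tau,T)\times\Omega)}\le C\|\theta\|_{L^2((\tau,T)\times\Omega)}$. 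Since $\Omega\subset\R$ is one-dimensional, $H^1(\Omega)\hookrightarrow L^\infty(\Omega)$, so $\|\phi(\cdot,\tau)\|_{L^\infty(\Omega)}\le C(T-\tau)\|\theta\|_{L^2((\tau,T)\times\Omega)}$; combining with the display above and the $L^1$ mass bound,
\[
\iint_{(\tau,T)\times\Omega}S\,\theta\le\|S(\cdot,\tau)\|_{L^1(\Omega)}\|\phi(\cdot,\tau)\|_{L^\infty(\Omega)}\le C(T-\tau)\Big(\sum_i\|u_{i,0}\|_{L^2(\Omega)}\Big)\|\theta\|_{L^2((\tau,T)\times\Omega)}.
\]
Taking the supremum over such $\theta$ (legitimate since $S\ge 0$) gives $\|S\|_{L^2((\tau,T)\times\Omega)}\le C(T-\tau)\sum_i\|u_{i,0}\|_{L^2(\Omega)}$ with $C(\cdot)$ polynomial, and then $\|u_i\|_{L^2((\tau,T)\times\Omega)}\le\|S\|_{L^2((\tau,T)\times\Omega)}$.

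The main technical obstacle is that $a$ is only bounded from above and below and need not be continuous, so the solvability of the dual problem and the integrations by parts must be justified by approximation: replace $a$ by smooth $a_\eps\in[\underline d,\overline d]$, solve the regularised backward problem classically, derive all the energy estimates above with constants independent of $\eps$, and pass to the limit; the $L^1$-bound on $S$ and the nonnegativity of $S$ are stable under this procedure. One should also note that the computation producing the bound on $\|\partial_{xx}\phi\|_{L^2}$ and $\|\phi(\cdot,\tau)\|_{L^2}$ is insensitive to the space dimension, so the duality estimate in terms of $\|S(\cdot,0)\|_{L^2}$ holds in any dimension; only the final refinement to a mass-dependent bound used the embedding $H^1\hookrightarrow L^\infty$ specific to $n=1$. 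Alternatively, the statement may simply be invoked as established in \cite{pierre2000blowup,fellner2019uniform}.
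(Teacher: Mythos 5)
Your argument is essentially the standard Pierre--Schmitt duality proof that the paper itself does not reproduce but simply cites from \cite{pierre2000blowup,fellner2019uniform}, and your refinement (pairing the $L^1$ mass bound $\|S(\tau)\|_{L^1}\le\|S(0)\|_{L^1}$ with an $L^\infty$ bound on $\phi(\cdot,\tau)$ via $H^1(\Omega)\hookrightarrow L^\infty(\Omega)$ in one dimension) is exactly what is needed to make the constant depend only on $T-\tau$ and the initial data rather than on $\|S(\tau)\|_{L^2}$, in the same spirit as the uniform-in-time estimate of \cite{fellner2019uniform}. So the approach is correct and consistent with the cited sources.

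Two steps should be tightened. First, the $L^2$ bound on $\phi(\cdot,\tau)$ cannot be obtained by ``multiplying by $\phi$ and using $\int a|\partial_x\phi|^2\ge 0$'': the dual equation is in non-divergence form, so integrating $\int a\,\phi\,\partial_{xx}\phi$ by parts produces the uncontrolled term $\int \phi\,\partial_x a\,\partial_x\phi$. Instead, first derive the estimate $\|\partial_x\phi(\cdot,\tau)\|_{L^2}+\|\partial_{xx}\phi\|_{L^2((\tau,T)\times\Omega)}\le C(\underline d)\|\theta\|_{L^2}$ with the multiplier $-\partial_{xx}\phi$ (as you do), and then get $\|\phi(\cdot,\tau)\|_{L^2}\le \sqrt{T-\tau}\,(\overline d/\underline d+1)\|\theta\|_{L^2}$ directly from $\phi(\cdot,\tau)=\int_\tau^T\bigl(a\,\partial_{xx}\phi+\theta\bigr)dt$; no Gronwall or sign of $\int a|\partial_x\phi|^2$ is needed. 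Second, in the regularisation step the primal inequality holds with $a$ while the dual problem is solved with $a_\eps$, so testing produces the extra term $\iint S\,(a_\eps-a)\,\partial_{xx}\phi_\eps$; this is where one uses that $u$ is a bounded classical solution on $[0,T]$, $T<T_{\max}$ (so $\|S\|_{L^\infty}$ is finite, though not quantitatively controlled), together with $a_\eps\to a$ in $L^2((\tau,T)\times\Omega)$ and the $\eps$-uniform bound on $\|\partial_{xx}\phi_\eps\|_{L^2}$, to make this term vanish in the limit. With these two points made explicit, your proof is complete and yields the stated polynomial dependence on $T-\tau$.
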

	The second lemma is the H\"older continuity of an auxiliary function.
	\begin{lemma}\cite{sun2021regularity}\label{lem:Holder}
		Assume \eqref{A1}--\eqref{A2'}. The function $Y$ defined as
		\begin{equation*}
			Y(x,t) = \Nx\sbra{\int_\tau^t\sumi d_iu_i(x,s)ds}, \quad (x,t)\in \Omega\times(\tau,T),
		\end{equation*}
		with $0\le \tau < T < T_{\max}$,
		is H\"older continuous, i.e. there is some $\gamma \in (0,1)$ such that
		\begin{equation*}
			|Y(t,x) - Y(t',x')| \leq \mathscr{C}(T-\tau)\bra{|x - x'|^{2\gamma} + |t - t'|^{\gamma}} \quad \forall (t,x), (t',x') \in \Omega\times(0,T),
			\end{equation*}
			where $\mathscr{C}(T-\tau)$ depends on $T-\tau$ through $\|Y\|_{L^{\infty}(\Omega\times(\tau,T))}\leq C(\Omega, T-\tau, d_i)$.
	\end{lemma}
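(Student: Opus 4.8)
The plan is to recognise $Y$ as a bounded weak solution of a uniformly parabolic equation in divergence form with merely bounded measurable coefficients, and then to invoke the De Giorgi--Nash--Moser regularity theory. Set $S:=\sum_{i=1}^m u_i$ and $W:=\sum_{i=1}^m d_iu_i$. Summing the equations in \eqref{eq1.1} and using the mass conservation \eqref{A2'} gives the exact cancellation $\partial_tS-\partial_{xx}W=0$ on $\Omega\times(\tau,T)$, together with the Neumann conditions $\partial_xW(0,t)=\partial_xW(L,t)=0$. With $Z(x,t):=\int_\tau^tW(x,s)\,ds$, so that $Y=\partial_xZ$ and $\partial_tZ=W$, this cancellation yields $\partial_xY=\partial_{xx}Z=\int_\tau^t\partial_{xx}W\,ds=\int_\tau^t\partial_tS\,ds=S(\cdot,t)-S(\cdot,\tau)$; and since $Y(0,t)=\int_\tau^t\partial_xW(0,s)\,ds=0$,
\begin{equation*}
Y(x,t)=\int_0^x\big(S(y,t)-S(y,\tau)\big)\,dy,\qquad |Y(x,t)|\le\|S(t)\|_{L^1(\Omega)}+\|S(\tau)\|_{L^1(\Omega)}.
\end{equation*}
Both $L^1$-norms are bounded by the total mass, conserved under \eqref{A2'}, so $\|Y\|_{L^\infty(\Omega\times(\tau,T))}\le C(\text{initial data})$; this is the claimed $L^\infty$-bound (and $Y(\cdot,\tau)=0$ serves as initial datum below).

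The key step is the reformulation of $Y$ as a solution of a good parabolic equation. Since $u_i\ge0$ one has $\underline d\,S\le W\le\overline d\,S$ with $\underline d=\min_id_i$, $\overline d=\max_id_i$, hence $W=aS$ for a measurable $a=a(x,t)\in[\underline d,\overline d]$ (set $a=\underline d$ on the null set $\{S=0\}$, where $W=0$ as well). Writing $W=aS=a\,(\partial_xY+S(\cdot,\tau))$ and using $\partial_tY=\partial_x\partial_tZ=\partial_xW$, we obtain that $Y$ solves, weakly on $\Omega\times(\tau,T)$ with $Y(\cdot,\tau)=0$ and homogeneous lateral data,
\begin{equation*}
\partial_tY-\partial_x\big(a(x,t)\,\partial_xY\big)=\partial_xG,\qquad G:=a(x,t)\,S(x,\tau).
\end{equation*}
The point is that the pointwise-uncontrolled term $\partial_xW$ has been split into the uniformly elliptic principal part $\partial_x(a\partial_xY)$ (ellipticity ratio $\overline d/\underline d$) and the single benign forcing $\partial_xG$ with $G\in L^\infty$: indeed $S(\cdot,\tau)\in L^\infty(\Omega)$, equal to $\sum_iu_{i,0}$ if $\tau=0$ and bounded for $\tau>0$ since $u$ is a classical solution on $[0,T]\subset[0,T_{\max})$. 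That $Y$ is a genuine energy solution, $Y\in L^2(\tau,T;H^1(\Omega))\cap L^\infty(\Omega\times(\tau,T))$, follows from $\partial_xY=S(\cdot,t)-S(\cdot,\tau)\in L^2$ by Lemma \ref{lem:duality}.

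It then remains to quote parabolic regularity. For divergence-form equations with bounded measurable coefficients, a bounded weak solution whose right-hand side is of the form $\partial_xG$ with $G\in L^q$, $q>n+2$, is locally H\"older continuous in $(x,t)$ with parabolic scaling; in one space dimension $q=\infty$ is admissible, so the parabolic De Giorgi--Nash--Moser estimate (see e.g. Ladyzhenskaya--Solonnikov--Ural'tseva, or Lieberman), applied up to the boundary, produces an exponent $\gamma\in(0,1)$ depending only on $\overline d/\underline d$ together with the bound $|Y(t,x)-Y(t',x')|\le\mathscr{C}(|x-x'|^{2\gamma}+|t-t'|^{\gamma})$, where $\mathscr{C}$ depends in addition on $L$, $T-\tau$, $\|Y\|_{L^\infty}$ and $\|G\|_{L^\infty}$; the time modulus comes for free from the parabolic (rather than elliptic) iteration. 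I expect the main obstacle to lie in the careful execution of this last part: verifying that $Y$ is an admissible weak solution and that the identity $W=aS$ may legitimately be inserted into the weak formulation (both relying on the interior parabolic smoothing of Theorem \ref{local}), and invoking the precise version of the parabolic De Giorgi--Nash--Moser estimate that allows a divergence-type forcing together with the mixed Neumann/Dirichlet lateral data. The one-dimensionality is genuinely used here, through the scalar identity $\partial_xY=S(\cdot,t)-S(\cdot,\tau)$, which is exactly what lets $\partial_xW$ be rewritten as a uniformly elliptic operator acting on $Y$.
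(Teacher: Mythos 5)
Your reduction is exactly the one that underlies the cited result (the paper itself only refers to \cite{sun2021regularity} for this lemma): sum the equations using \eqref{A2'}, write $w=a\,z$ with a measurable coefficient $a\in[\min_i d_i,\max_i d_i]$, note $\partial_x Y=z(\cdot,t)-z(\cdot,\tau)$ and $Y=0$ at $x=0,L$ by the Neumann conditions, and read off a divergence-form uniformly parabolic equation for $Y$ to which De Giorgi--Nash--Moser applies. So the strategy is sound.

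There is, however, one genuine defect in the execution: the constant you produce is not of the form the lemma asserts, and this matters for how the lemma is used. For $\tau>0$ your flux forcing is $G=a\,z(\cdot,\tau)$, and you bound it in $L^\infty$ merely because the classical solution is bounded on $[0,T]$; consequently your H\"older constant depends on $\|z(\cdot,\tau)\|_{L^\infty(\Omega)}$, i.e.\ on the solution itself, rather than only on $\|Y\|_{L^\infty}$, $T-\tau$, $\Omega$, $d_i$ and the initial data. The lemma is applied in Lemma \ref{large_time} with $\tau=T-\eps^{\frac{1}{1+2\gamma}}$, and the uniform-in-time Morrey bound of Proposition \ref{pro:Morrey_bound} requires a constant independent of $T$; a constant involving $\|z(\tau)\|_{L^\infty}$ is unavailable at that stage (it is essentially what the whole paper is trying to prove) and would make the argument circular. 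The fix is not to re-base the time integral at $\tau$: work with $\tilde Y(x,t)=\partial_x\bigl[\int_0^t w(x,s)\,ds\bigr]$ as in \eqref{auxiliary}, which solves
\begin{equation*}
\partial_t\tilde Y=\partial_x\bigl(a\,\partial_x\tilde Y + a\,z(\cdot,0)\bigr),\qquad \tilde Y(0,t)=\tilde Y(L,t)=0,\qquad \tilde Y(\cdot,0)=0,
\end{equation*}
so the forcing involves only the initial data $z(\cdot,0)=\sum_i u_{i,0}\in L^\infty(\Omega)$, while $\|\tilde Y\|_{L^\infty}\le 2\|z(0)\|_{L^1(\Omega)}$ by mass conservation. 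The De Giorgi--Nash--Moser (LSU/Lieberman) estimate then gives a H\"older bound for $\tilde Y$ on $\Omega\times(0,T)$ with constant independent of $T$, and the lemma's $Y$ is recovered as $Y(\cdot,t)=\tilde Y(\cdot,t)-\tilde Y(\cdot,\tau)$, whose H\"older seminorm is at most twice that of $\tilde Y$. Two smaller points: the parabolic modulus you get is $|x-x'|^{\alpha}+|t-t'|^{\alpha/2}$, so you should set $\gamma=\alpha/2<1/2$ to match the stated form $|x-x'|^{2\gamma}+|t-t'|^{\gamma}$; and Lemma \ref{lem:duality} gives a space-time $L^2$ bound, not a pointwise-in-time one, so it cannot be used to salvage the $\tau$-based forcing either.
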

%	\begin{center}
%		{\color{red}\textbf{Probably the same as the previous lemma.}}
%	\end{center}

\section{Bounds in Morrey spaces}\label{sec:Morrey}
\subsection{Uniform-in-time bound in Morrey spaces}
	For $\delta>0$, the \textit{Morrey norm} is defined as
	\begin{equation}\label{def_Morrey}
		\|f\|_{\M^{1,\delta}(\Omega)}:= \sup_{0<\eps<L}\sup_{x_0\in\Omega}\eps^{-\delta}\|f\|_{L^1([x_0-\eps,x_0+\eps]\cap \Omega)}.
	\end{equation}
	The main result of this section is the following
	\begin{proposition}\label{pro:Morrey_bound}
		Assume \eqref{A1}, \eqref{A2'}, and \eqref{A3}. Then there exist $\delta>0$ and $\mathscr{K}>0$ such that
		\begin{equation}\label{Morrey_bound}
			\sup_{i=1,\ldots, m}\|u_i(T)\|_{\M^{1,\delta}(\Omega)} \leq \mathscr{K}(d_i,\|u_{i0}\|_{L^2(\Omega)}),
		\end{equation}
		where $\mathscr{K}$ is independent ot $T$.
	\end{proposition}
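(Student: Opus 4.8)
The plan is to bound the local $L^1$-mass of each $u_i$ on an interval $I_\eps(x_0) := [x_0-\eps, x_0+\eps] \cap \Omega$ by $C\eps^\delta$ for suitable $\delta>0$, uniformly in $\eps\in(0,L)$, $x_0\in\Omega$, and $T\in(0,T_{\max})$. By \eqref{A3} with the lower-triangular matrix $A$, it suffices to control a single scalar quantity: setting $v := \sum_{i=1}^m d_i u_i$ (or more precisely working with the combinations $\sum_{j\le i} a_{ij} u_j$ inductively up the triangular structure), the mass-conservation assumption \eqref{A2'} forces $\partial_t \sum_i u_i - \partial_{xx} v = 0$, so $\sum_i u_i(x,T) = \sum_i u_{i,0}(x) + \partial_{xx}\!\left[\int_0^T v\,ds\right]$ in the distributional sense on $\Omega$. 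Integrating this identity against the indicator of $I_\eps(x_0)$ (or rather a smooth cutoff $\cutoff$ adapted to $I_\eps(x_0)$, with $\|\partial_x \cutoff\|_\infty \lesssim \eps^{-1}$), the Laplacian term becomes $\int_{\partial I_\eps} \partial_x\!\left[\int_0^T v\right]$, i.e. boundary values of the auxiliary function $Y(x,T) = \partial_x\!\left[\int_0^T v\,ds\right]$ from Lemma \ref{lem:Holder} (applied with $\tau=0$ and the weights $d_i$).

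The key point is that Lemma \ref{lem:Holder} gives $Y(\cdot,T)$ H\"older continuous in $x$ with exponent $2\gamma$ and a constant $\mathscr C$ depending on $T$ only through $\|Y\|_{L^\infty(\Omega\times(0,T))}$, and \emph{this $L^\infty$-bound is itself uniform in $T$}: indeed $\|Y(\cdot,T)\|_{L^\infty} \lesssim \|\int_0^T v\|_{W^{1,\infty}}$ is controlled using the $L^2$-duality estimate of Lemma \ref{lem:duality} together with parabolic smoothing (the function $w := \int_0^T v\,ds$ solves a Poisson-type problem with right-hand side in $L^2$, uniformly; or one argues on unit time-windows and sums geometrically, using that the total mass $\|u_i(T)\|_{L^1}$ stays bounded by \eqref{A2'}). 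Granting this, for any two points $x, x' \in I_\eps(x_0)$ we get $|Y(x,T) - Y(x',T)| \le \mathscr C\, (2\eps)^{2\gamma}$, hence the net flux contribution through the two endpoints of $I_\eps(x_0)$ — which is a difference $Y(x_0+\eps,T) - Y(x_0-\eps,T)$ of this type — is $O(\eps^{2\gamma})$. Combining with the trivial bound $\int_{I_\eps} \sum_i u_{i,0} \le \|u_0\|_\infty \cdot 2\eps$ for the initial-data term, we obtain $\int_{I_\eps(x_0)} \sum_i u_i(x,T)\,dx \lesssim \eps + \eps^{2\gamma}$, which is $\lesssim \eps^{\min(1,2\gamma)}$. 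Setting $\delta := \min(1, 2\gamma) > 0$ and taking the supremum over $\eps, x_0$ yields \eqref{Morrey_bound} for $\sum_i u_i$, and since all $u_i \ge 0$ this bounds each $u_i$ individually.

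One subtlety: \eqref{A2'} only controls $\sum_i u_i$, not the individual components, so the above directly gives the Morrey bound for the sum. To be careful one should note that nonnegativity (Theorem \ref{local}) gives $\|u_i\|_{\M^{1,\delta}} \le \|\sum_k u_k\|_{\M^{1,\delta}}$ pointwise in the defining supremum, so no use of the full strength of \eqref{A3} is needed for the \emph{statement} of the proposition as written — the triangular matrix $A$ and assumption \eqref{A3} will matter later (for the $L^p$-energy estimates in Section \ref{sec:second_thm}), and are listed here presumably to fix hypotheses consistently. A second subtlety is the cutoff argument near $\partial\Omega = \{0, L\}$: when $I_\eps(x_0)$ meets the boundary, the Neumann condition $\partial_x u_i = 0$ makes the corresponding endpoint flux vanish, so only the interior endpoint contributes and the estimate only improves.

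I expect the \textbf{main obstacle} to be establishing the \emph{time-uniformity} of the $L^\infty$-bound on $Y$ that feeds Lemma \ref{lem:Holder}: Lemma \ref{lem:duality} as quoted has a constant growing polynomially in $T-\tau$, so a naive application degrades with $T$. The fix is to run the whole argument on translated windows $(\tau, \tau+1)$ with $\tau = T-1$, using the uniform $L^\infty(0,\infty;L^1(\Omega))$ bound on $u_i$ (immediate from \eqref{A2'}) as the starting regularity on each window, and invoking the smoothing effect of the heat semigroup together with the window-version of Lemma \ref{lem:duality}; one then checks that $\|Y(\cdot,T)\|_{L^\infty}$, built from $\int_{T-1}^T v\,ds$ plus a controlled contribution from the "past" encoded in $u(\cdot,T-1)$, is bounded independently of $T$. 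Making this bookkeeping clean — matching the $\tau$ in Lemma \ref{lem:Holder} and Lemma \ref{lem:duality} with the window and tracking that $\mathscr C(T-\tau) = \mathscr C(1)$ is then a fixed constant — is the technical heart of the proof.
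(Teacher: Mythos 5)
Your overall mechanism --- test $\partial_t z=\partial_{xx}w$ (with $z=\sum_i u_i$, $w=\sum_i d_iu_i$) against a cutoff adapted to $[x_0-\eps,x_0+\eps]$, convert the diffusion term into increments of $Y=\partial_x\int w\,ds$, and invoke the H\"older continuity of $Y$ from Lemma \ref{lem:Holder} --- is exactly the paper's, and your side remarks (only the sum needs to be estimated; the Neumann endpoint kills the boundary flux) are correct. The genuine gap is precisely at the point you yourself flag as the technical heart: uniformity in $T$. Your proposed fix, ``run the whole argument on windows $(T-1,T)$'', does not close, because once the base time is moved to $\tau=T-1$ the identity produces the data term $\int_{I_\eps}z(x,T-1)\,dx$, and at this stage of the paper the only uniform-in-time information at time $T-1$ is the $L^1$ bound, which gives $O(1)$, not $O(\eps^\delta)$; a uniform $L^2$ or $L^\infty$ bound at time $T-1$ is exactly what the whole argument is ultimately trying to produce, so this route is circular. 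Your non-windowed route ($\tau=0$) instead needs the H\"older constant of $Y$ on $\Omega\times(0,T)$ to be $T$-independent; your justification of the uniform $L^\infty$ bound on $Y$ is also off target (the relevant relation is $\partial_{xx}\int_0^T w\,ds=z(T)-z_0$, whose right-hand side is uniformly bounded only in $L^1$, not $L^2$; in one dimension this does give $\|Y(\cdot,T)\|_{L^\infty}\le\|z(T)\|_{L^1}+\|z_0\|_{L^1}$ by integrating from the Neumann endpoint, with no duality or smoothing needed), and even granting it one must read Lemma \ref{lem:Holder} as giving a constant depending on the time horizon \emph{only} through $\|Y\|_{L^\infty}$ --- a strengthening of the quoted statement that the paper deliberately avoids having to rely on.

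The paper's actual proof sidesteps both issues by coupling the time window to $\eps$: it treats $T\le\eps^{\frac1{1+2\gamma}}$ directly (there the data term is harmless, estimated by $\|z_0\|_{L^2}\eps^{1/2}$, and the flux term by the \emph{time}-H\"older continuity $|Y(x,T)-Y(x,0)|\le CT^{\gamma}$), and for $T\ge\eps^{\frac1{1+2\gamma}}$ it sets $\tau=T-\eps^{\frac1{1+2\gamma}}$ and integrates the weighted identity $\partial_t[(t-\tau)z\cutoff]=z\cutoff+(t-\tau)\cutoff\partial_{xx}w$ over $(\tau,T)$, so that \emph{no data at $t=\tau$ enters at all}; the bulk term is controlled by the uniform-window duality estimate of Lemma \ref{lem:duality} and the flux term again by the time-H\"older bound, and dividing by $T-\tau$ yields $\delta=\gamma/(1+2\gamma)$, with all constants depending only on the (bounded) window length. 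So the missing idea in your proposal is this weighted-in-time identity (or an equivalent device) that removes the data term on translated windows; without it, your large-time argument as written fails, although if the $T$-uniform H\"older constant for $Y$ on $(0,T)$ were fully justified your single-window version would in fact give the slightly better exponent $\min(1,2\gamma)$.
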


	To prove Proposition \ref{pro:Morrey_bound}, will use the following auxiliary functions
			\begin{equation}\label{auxiliary}
				\begin{aligned}
				z(x,t) = \sumi u_i(x,t), \quad w(x,t) = \sumi d_iu_i(x,t)
				\quad \text{ and }\quad  Y(x,t) = \pa_x\sbra{\int_0^t w(x,s)ds}.
			 	\end{aligned}
			\end{equation}
	With these notation, it follows from summing equations in \eqref{eq1.1} and using \eqref{A2'} that
	\begin{equation}\label{aux_eq}
		\pa_t z - \pa_{xx}w = 0.
	\end{equation}
	To deal with Morrey spaces, let $\varphi\in C^{\infty}(\R)$ such that
	$\varphi(x)\equiv 1$ if  $x\in[-1,1]$ and $\varphi(x)\equiv0$ if $x\notin[-2,2]$.
	For $\eps>0$ and $x_0\in\Omega$, we define the rescaled function
	\begin{equation}\label{def-phi}
	\varphi_{\eb,x_0}(x):=\varphi\(\frac{x-x_0}\eb\).
	\end{equation}
	It's clear that
	\begin{equation}\label{def-phi'}
		|\cutoff'(x)| \leq C\eps^{-1} \quad \forall x\in \R,
	\end{equation}
    where the constant $C$ is independent of $\eps$ and $x_0$.

	Proposition \ref{pro:Morrey_bound} will be proved by considering the case when $T$ is small in Lemma \ref{small_time} and when $T$ is large in Lemma \ref{large_time}.	
	\begin{lemma}\label{small_time}
		Let $0<\eps<L$ ($\eps$ is small) and $x_0\in\Omega$. Then for $0\leq T \leq \eps^{\frac{1}{1+2\gamma}}$ we have
		\begin{equation}\label{f1}
			\intO z(x,T)\cutoff(x)dx \leq C\bra{1+\|z(0)\|_{L^2(\Omega)}}\eps^{\frac{\gamma}{1+2\gamma}},
		\end{equation}
		where the constant $C$ is independent of $\eps$, $T$ and $x_0$.
	\end{lemma}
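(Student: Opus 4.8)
The plan is to exploit the auxiliary equation \eqref{aux_eq}, $\partial_t z - \partial_{xx} w = 0$, together with the Hölder continuity of $Y$ from Lemma \ref{lem:Holder} (taken here with $\tau = 0$), to control the localized mass $\intO z(x,T)\cutoff(x)\,dx$ when $T$ is small. First I would test \eqref{aux_eq} against $\cutoff$ and integrate over $\Omega\times(0,T)$: since $z(x,0)=\sumi u_{i,0}(x)\ge 0$ and $\|z(0)\|_{L^1(\Omega)} \le C\|z(0)\|_{L^2(\Omega)}$, this gives
\begin{equation*}
	\intO z(x,T)\cutoff(x)\,dx = \intO z(x,0)\cutoff(x)\,dx + \int_0^T\!\!\intO w(x,s)\,\cutoff''(x)\,dx\,ds.
\end{equation*}
The first term on the right is bounded by $\|z(0)\|_{L^1(\Omega)}\|\cutoff\|_{L^\infty} \le C\|z(0)\|_{L^2(\Omega)}$; but this is only $O(1)$, not $O(\eps^{\gamma/(1+2\gamma)})$, so a naive split is not enough — one must also use that the support of $\cutoff$ has measure $O(\eps)$ together with the $L^2$ bound. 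Indeed $\intO z(x,0)\cutoff(x)\,dx \le \|z(0)\|_{L^2(\Omega)}\,|{\rm supp}\,\cutoff|^{1/2} \le C\|z(0)\|_{L^2(\Omega)}\,\eps^{1/2}$, and since $\tfrac12 \ge \tfrac{\gamma}{1+2\gamma}$ this term is absorbed into the right-hand side of \eqref{f1}.

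The heart of the matter is the space-time term $\int_0^T\!\!\intO w\,\cutoff''\,dx\,ds$. Rather than bounding $w$ directly (we have no pointwise bound on $w$), I would integrate by parts in $x$ to bring in $Y$: writing $\int_0^T w(x,s)\,ds = \partial_x^{-1}$-primitive only formally, the cleaner route is to note that $\int_0^T\!\!\intO w(x,s)\cutoff''(x)\,dx\,ds = \intO \big(\int_0^T w(x,s)\,ds\big)\cutoff''(x)\,dx = -\intO Y(x,T)\,\cutoff'(x)\,dx$, using the definition $Y(x,T) = \partial_x[\int_0^T w(x,s)\,ds]$ and the boundary behaviour of $\cutoff$. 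Now Lemma \ref{lem:Holder} gives Hölder continuity of $Y$; combined with the fact that $\int_0^T w\,ds$ has zero... — more precisely, since $\cutoff'$ is supported in two intervals of length $O(\eps)$ and integrates to a controlled amount, I would subtract a constant: pick a reference point $\bar x$ in ${\rm supp}\,\cutoff'$ and write $\intO Y(x,T)\cutoff'(x)\,dx = \intO (Y(x,T) - Y(\bar x, T))\cutoff'(x)\,dx + Y(\bar x,T)\intO \cutoff'(x)\,dx$. The second piece vanishes if $\cutoff'$ integrates to zero over each bump, or is otherwise $O(\eps^{-1}\cdot \eps) = O(1)$ times $\|Y\|_{L^\infty}$ — this requires care; the safer design is to use $\intO \cutoff'\,dx = 0$ on all of $\R$ and handle the boundary case $x_0$ near $\partial\Omega$ separately. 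For the first piece, $|Y(x,T)-Y(\bar x,T)| \le \mathscr{C}(T)(|x-\bar x|^{2\gamma} + 0) \le \mathscr{C}(T)\,\eps^{2\gamma}$ on the support of $\cutoff'$, so
\begin{equation*}
	\left| \intO (Y(x,T) - Y(\bar x,T))\cutoff'(x)\,dx \right| \le \mathscr{C}(T)\,\eps^{2\gamma}\cdot C\eps^{-1}\cdot \eps = C\,\mathscr{C}(T)\,\eps^{2\gamma}.
\end{equation*}
Since $T \le \eps^{1/(1+2\gamma)} \le L$, the constant $\mathscr{C}(T)$ is bounded by $\mathscr{C}(L)$, which depends only on $d_i$, $\Omega$ and $\|z(0)\|_{L^2}$ through the $L^\infty$ bound on $Y$. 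But $\eps^{2\gamma}$ is a better power than $\eps^{\gamma/(1+2\gamma)}$ for small $\eps$ (since $2\gamma > \gamma/(1+2\gamma)$), so this term is also absorbed. The remaining contribution, the one that actually produces the stated exponent, comes from the temporal Hölder estimate: I would additionally compare $Y(x,T)$ with $Y(x,0) = 0$, getting $|Y(x,T)| \le \mathscr{C}(T)\,T^\gamma$, and feed $T \le \eps^{1/(1+2\gamma)}$ to obtain $T^\gamma \le \eps^{\gamma/(1+2\gamma)}$; multiplying by $\|\cutoff'\|_{L^1} = O(1)$ yields exactly the right-hand side of \eqref{f1}.

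The main obstacle I anticipate is the bookkeeping around the cutoff near the boundary of $\Omega=(0,L)$: when $x_0$ is within distance $2\eps$ of $0$ or $L$, the function $\cutoff$ restricted to $\Omega$ is no longer symmetric, the Neumann boundary conditions interact with the integration by parts, and $\intO \cutoff'\,dx$ need not vanish. I would handle this either by extending $z, w, Y$ evenly across the boundary (which is natural given the homogeneous Neumann conditions and makes the extended auxiliary equation hold across $\partial\Omega$) so that the cutoff argument runs on all of $\R$, or by absorbing the $O(1)\cdot\|Y\|_{L^\infty}$ boundary remainder — which is fine since we only need the bound \eqref{f1} up to a multiplicative constant and $\|Y\|_{L^\infty(\Omega\times(0,T))} \le C(\Omega, T, d_i) \le C(\Omega, L, d_i)$ for $T \le L$. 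A secondary technical point is making the formal manipulation $\int_0^T\!\!\intO w\,\cutoff''\,dx\,ds = -\intO Y(\cdot,T)\,\cutoff'\,dx$ rigorous, which requires Fubini and the fact that $w = \sumi d_i u_i \in L^1$ in space-time (guaranteed by Lemma \ref{lem:duality}), together with smoothness of $\cutoff$; this is routine once the regularity of the local solution is invoked.
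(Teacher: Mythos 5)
Your proposal is correct and follows essentially the same route as the paper: test the summed equation with $\cutoff$, rewrite the flux term as $-\intO \sbra{Y(x,T)-Y(x,0)}\pa_x\cutoff\,dx$, bound it by $\|\pa_x\cutoff\|_{L^1(\Omega)}\,\mathscr{C}\,T^{\gamma}\le C\eps^{\frac{\gamma}{1+2\gamma}}$ using the temporal H\"older continuity of $Y$, and control the initial term by Cauchy--Schwarz on the support of measure $O(\eps)$, absorbing $\eps^{1/2}$ since $\tfrac12>\tfrac{\gamma}{1+2\gamma}$. The only deviations are superfluous: the detour through $\cutoff''$ and the reference-point/spatial-H\"older splitting are not needed, because a single integration by parts (as in the paper) leaves only the boundary term $\sbra{\cutoff\, Y}_{0}^{L}$, which vanishes by the Neumann condition, so the boundary bookkeeping you worry about --- and in particular the untenable fallback of ``absorbing'' an $O(1)\|Y\|_{L^\infty}$ remainder into $C\eps^{\frac{\gamma}{1+2\gamma}}$ --- never arises.
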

	\begin{proof}
		From \eqref{aux_eq}, direct computations give		
\begin{align*}
			\intO z(x,T)\cutoff dx - \intO z(x,0)\cutoff dx
			&= \intO \cutoff \pa_{xx}\bra{\int_{0}^T w(x,s)ds}dx\\%  + \intO\int_{0}^T\cutoff g(x,t)dxdt\\
			&=-\intO  \pa_x\bra{\int_0^T w(x,s)ds}\pa_x\cutoff dx\\%+ \intO\int_{0}^T\cutoff g(x,t)dxdt\\
			&= -\intO [Y(T,x) - Y(0,x)]\pa_x\cutoff  dx \leq C\|\pa_x\cutoff\|_{L^1(\Omega)}T^{\gamma},
		\end{align*}
		where we apply the H\"older continuity of $Y$ proved in Lemma \ref{lem:Holder} at the last step.
		From the definition of $\cutoff$ we have
		\begin{equation*}
			\|\pa_x \cutoff\|_{L^1(\Omega)} = \eps^{-1}\int_{\Omega}\left|\varphi'\bra{\frac{x-x_0}{\eps}} \right|dx \leq \eps^{-1}\int_{\Omega\cap [x_0-2\eps,x_0+2\eps]}C_\varphi dx \leq 4C_\varphi.
		\end{equation*}
		Thus it follows
		\begin{equation*}
			\intO z(x,T)\cutoff(x)dx \leq \intO z(x,0)\cutoff(x)dx + C(\varphi,L)T^{\gamma}.
		\end{equation*}
		By H\"older's inequality
		\begin{align*}
			\intO z(x,0)\cutoff(x)dx &\leq \|z(0)\|_{L^2(\Omega)}\sbra{\int_{\Omega\cap [x_0-2\eps,x_0+2\eps]}|\cutoff(x)|^2dx}^{1/2}\\
			&\le 2\|z(0)\|_{L^2(\Omega)}\eps^{1/2}.
		\end{align*}
		Therefore, we obtain
		\begin{align*}
			\intO z(x,T)\cutoff(x)dx &\leq 2\|z(0)\|_{L^2(\Omega)}\eps^{1/2} + C(\varphi,L)T^{\gamma}\\
			&\leq \sbra{2\|z(0)\|_{L^2(\Omega)}L^{\frac{1}{2(1+2\gamma)}} + C(\varphi,L)}\eps^{\frac{\gamma}{1+2\gamma}},
		\end{align*}
		which is the desired estimate.
	\end{proof}

	\begin{lemma}\label{large_time}
		Let $0<\eps<L$ and $x_0\in \Omega$. Then for $\eps^{\frac{1}{1+2\gamma}} \le T\leq T_{\max}$ it holds
		\begin{equation*}
			\intO z(x,T)\cutoff(x)dx \leq C(\varphi,\|z\|_{L^2(0,T;L^2(\Omega))})\eps^{\frac{\gamma}{1+2\gamma}},
		\end{equation*}
		where $C$ is independent of $x_0$, $T$ and $\eps$.
	\end{lemma}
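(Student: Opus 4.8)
The plan is to bootstrap from the short-time estimate in Lemma \ref{small_time} by using the evolution equation \eqref{aux_eq} on the time window $(T-h,T)$, where $h:=\eps^{\frac{1}{1+2\gamma}}$ is exactly the length for which the H\"older bound on $Y$ is usable. First I would invoke Lemma \ref{lem:Holder} on the interval $(T-h,T)$: the function $Y_{T-h}(x,t):=\pa_x\big[\int_{T-h}^t w(x,s)ds\big]$ is H\"older continuous with a constant $\mathscr{C}(h)$ that depends on $h$ only through $\|Y_{T-h}\|_{L^\infty(\Omega\times(T-h,T))}$, and this $L^\infty$ bound is itself controlled by $h$ and the data (via the $L^2$-duality bound of Lemma \ref{lem:duality}). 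Since $h\le L^{\frac{1}{1+2\gamma}}$ is bounded, $\mathscr{C}(h)$ is bounded by a constant depending only on $\varphi$ and $\|z\|_{L^2(0,T;L^2(\Omega))}$ (the latter entering through $\|w\|_{L^1}$-type bounds on the window).

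Next, repeating the computation in the proof of Lemma \ref{small_time} but integrating from $T-h$ to $T$ rather than from $0$ to $T$, I get
\begin{align*}
\intO z(x,T)\cutoff(x)\,dx - \intO z(x,T-h)\cutoff(x)\,dx
&= -\intO\big[Y_{T-h}(T,x) - Y_{T-h}(T-h,x)\big]\pa_x\cutoff\,dx\\
&\leq \mathscr{C}(h)\,\|\pa_x\cutoff\|_{L^1(\Omega)}\,h^{\gamma}
\leq C(\varphi)\,h^{\gamma} = C(\varphi)\,\eps^{\frac{\gamma}{1+2\gamma}}.
\end{align*}
It remains to bound $\intO z(x,T-h)\cutoff(x)\,dx$ by $C\,\eps^{\frac{\gamma}{1+2\gamma}}$. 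Here I would use that $\cutoff$ is supported in $\Omega\cap[x_0-2\eps,x_0+2\eps]$, so by Cauchy--Schwarz
\[
\intO z(x,T-h)\cutoff(x)\,dx \leq \|z(T-h)\|_{L^2(\Omega)}\Big(\int_{\Omega\cap[x_0-2\eps,x_0+2\eps]}|\cutoff|^2\,dx\Big)^{1/2} \leq 2\,\|z(T-h)\|_{L^2(\Omega)}\,\eps^{1/2}.
\]
Since $\eps^{1/2}\le L^{\frac{1}{2(1+2\gamma)}-\frac{\gamma}{1+2\gamma}}\cdot \eps^{\frac{\gamma}{1+2\gamma}}$ whenever $\tfrac12\ge \tfrac{\gamma}{1+2\gamma}$ (which holds since $\gamma<1$ forces $\tfrac{\gamma}{1+2\gamma}<\tfrac12$), this term is $\le C(L)\,\|z(T-h)\|_{L^2(\Omega)}\,\eps^{\frac{\gamma}{1+2\gamma}}$, and one needs a \emph{uniform-in-time} $L^2$ bound on $z(\cdot,T-h)$. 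This is the main obstacle: Lemma \ref{lem:duality} only gives a space-time $L^2$ bound with a constant growing (polynomially) in $T-\tau$, not a pointwise-in-time $L^2$ bound. I expect one resolves this either by a mean-value argument — choosing the left endpoint of the window not to be exactly $T-h$ but some $t_*\in(T-h,T-h/2)$ at which $\|z(t_*)\|_{L^2}^2\le \tfrac{2}{h}\|z\|_{L^2(T-h,T;L^2(\Omega))}^2$, which is finite with a constant depending only on $\|z\|_{L^2(0,T;L^2(\Omega))}$ and a negative power of $h$ — or by noting that the hypothesis already allows the constant to depend on $\|z\|_{L^2(0,T;L^2(\Omega))}$, so one absorbs the $h^{-1/2}$ factor into a harmless power of $\eps$ since $h^{-1/2}\eps^{1/2}=\eps^{\frac{1}{2}-\frac{1}{2(1+2\gamma)}}=\eps^{\frac{\gamma}{1+2\gamma}}$, which is precisely the target rate. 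With that choice of $t_*$ the window has length in $[h/2,h]$, the H\"older estimate on $Y_{t_*}$ still applies with a constant bounded in terms of $\varphi$ and $\|z\|_{L^2(0,T;L^2(\Omega))}$, and collecting the two contributions yields $\intO z(x,T)\cutoff\,dx\le C(\varphi,\|z\|_{L^2(0,T;L^2(\Omega))})\,\eps^{\frac{\gamma}{1+2\gamma}}$, as claimed.
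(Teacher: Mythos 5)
Your proposal is correct, and it reaches the estimate by a slightly different device than the paper. You handle the lack of a pointwise-in-time $L^2$ bound by a pigeonhole/mean-value selection: pick $t_*\in(T-h,T-h/2)$ with $\|z(t_*)\|_{L^2(\Omega)}^2\le \frac{2}{h}\|z\|_{L^2(T-h,T;L^2(\Omega))}^2$, bound $\intO z(\cdot,t_*)\cutoff\,dx$ by Cauchy--Schwarz (the factor $h^{-1/2}\eps^{1/2}=\eps^{\gamma/(1+2\gamma)}$ is exactly the target rate), and then rerun the short-time computation of Lemma \ref{small_time} on the window $(t_*,T)$ using the H\"older continuity of $Y$ restarted at $t_*$. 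The paper instead never evaluates $z$ at a fixed time: it integrates the identity $\pa_t[(t-\tau)z\cutoff]=z\cutoff+(t-\tau)\cutoff\pa_{xx}w$ over $\Omega\times(\tau,T)$ with $\tau=T-\eps^{1/(1+2\gamma)}$, so that the left-hand side produces $(T-\tau)\intO z(\cdot,T)\cutoff\,dx$, the term $\int_\tau^T\intO z\cutoff$ is controlled directly by the space-time $L^2$ duality bound, and the weight $(t-\tau)$ is integrated by parts in time to turn $\int_\tau^T(t-\tau)\pa_tY\,dt$ into $\int_\tau^T(Y(\cdot,T)-Y(\cdot,t))\,dt$, estimated by the same H\"older continuity. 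Both arguments use the identical three ingredients (window length $\eps^{1/(1+2\gamma)}$, Lemma \ref{lem:Holder}, Lemma \ref{lem:duality}) and the same exponent bookkeeping; the paper's weighted time-averaging is marginally slicker in that it avoids the slice-selection step, while your version is more elementary and reuses Lemma \ref{small_time} verbatim. One cosmetic remark: your ``second alternative'' for resolving the obstacle is really the same mean-value argument phrased loosely (the factor $h^{-1/2}$ only appears after the slice is selected), and the H\"older constant for $Y$ on the window is uniform simply because $T-t_*\le L^{1/(1+2\gamma)}$, independently of $\varphi$ or $\|z\|_{L^2(0,T;L^2(\Omega))}$.
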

	\begin{proof}
		Choose
		\begin{equation}\label{f1_1}
			\tau = T - \eps^{\frac{1}{1+2\gamma}}.
		\end{equation}
		From \eqref{aux_eq} it follows that
		\begin{equation*}
			\pa_t\sbra{(t-\tau)z\cutoff} = z\cutoff + (t-\tau)\cutoff \pa_{xx}w.
		\end{equation*}
		Integration on $\Omega\times (\tau,T)$ gives
		\begin{equation}\label{f2}
		\begin{aligned}
			&(T-\tau)\intO z(x,T)\cutoff(x)dx
			 &= \int_{\tau}^{T}\intO z(x,t)\cutoff(x)dx + \int_{\tau}^T\intO \sbra{(t-\tau)\cutoff(x)\pa_{xx}w}dxdt.
		\end{aligned}
		\end{equation}
		By H\"older's inequality
		\begin{equation}\label{f3}
		\begin{aligned}
			\int_\tau^T\int_{\Omega}z(x,t)\cutoff(x)dxdt
			&\le \int_{\tau}^{T}\int_{\Omega\cap[x_0-2\eps,x_0+2\eps]}z(x,t)dxdt\\
			&\le \sbra{\int_{\tau}^T\intO|z(x,t)|^2dxdt}^{1/2}\sbra{\int_{\tau}^{T}\int_{\Omega\cap[x_0-2\eps,x_0+2\eps]}dxdt}^{1/2}\\
			&\leq \|z\|_{L^2(\tau,T;L^2(\Omega))}\sbra{4\eps|T-\tau|}^{1/2}\\
			&\le C(\eps)\eps^{1/2}|T-\tau|^{1/2} \le C(L)\eps^{1/2}|T-\tau|^{1/2} 
		\end{aligned}
		\end{equation}
		where we apply Lemma \ref{lem:duality} at the last step with the remark that $C(\eps)$ depends polynomially on $\eps$.
		From the auxiliary functions \eqref{auxiliary} we have
		\begin{equation}\label{f3_1}
		\begin{aligned}
			\int_{\tau}^T\intO \sbra{(t-\tau)\cutoff(x)\pa_{xx}w}dxdt
			&= -\int_\tau^T\intO (t-\tau)\pa_x \cutoff \pa_x w(x,t)dxdt\\
			&= -\intO \pa_x \cutoff \bra{\int_{\tau}^T(t-\tau)\pa_t Y(x,t)dt}dx\\
			&= -\intO \pa_x \cutoff\sbra{(T-\tau)Y(x,T)- \int_\tau^T Y(x,t)dt}dx\\
			&= -\intO \pa_x \cutoff\sbra{\int_\tau^T (Y(x,T) - Y(x,t))dt}dx\\
			&\leq \|\pa_x \cutoff\|_{L^1(\Omega)}\int_{\tau}^T\|Y(\cdot,T)-Y(\cdot,t)\|_{L^{\infty}(\Omega)}dt\\
			&\leq C_{\varphi} \int_{\tau}^TC|T - t|^{\gamma}dt \quad (\text{thanks to Lemma \ref{lem:Holder}})\\
			&\leq C(\varphi,\gamma)|T-\tau|^{1+\gamma}.
		\end{aligned}
		\end{equation}
%        Since
%        \begin{equation}\label{f3_2}
%		\begin{aligned}
%			\int^T_{\tau}\int_{\Omega}(t-\tau)g\cutoff dxdt &\leq \|g\|_{\LQ{\infty}}\int^T_{\tau}\int_{\Omega}(t-\tau)\cutoff\, dxdt\\
%            &\leq\|g\|_{\LQ{\infty}}\int^T_{\tau}\int_{\Omega\cap[x_0-2\eps,x_0+2\eps]}(t-\tau)\, dxdt\\
%            &=\frac 12\|g\|_{\LQ{\infty}}(T-\tau)^2\cdot 4\varepsilon\\
%            &=2\varepsilon\|g\|_{\LQ{\infty}}(T-\tau)^2
%		\end{aligned}
%		\end{equation}

		Combining \eqref{f3} \eqref{f2} and  \eqref{f3_1} gives
		\begin{equation*}
        \begin{aligned}
			(T-\tau)\intO z(x,T)\cutoff(x)dx \leq C(L)\sbra{\eps|T-\tau|}^{1/2} + C(\varphi,\gamma)|T-\tau|^{1+\gamma}.
		\end{aligned}
        \end{equation*}
		Due to \eqref{f1_1} we finally obtain
		\begin{equation*}
			\intO z(x,T)\cutoff(x)dx \leq C(\varphi,\gamma,L)\eps^{\frac{\gamma}{1+2\gamma}},
		\end{equation*}
		which finishes the proof of Lemma \ref{large_time}.
	\end{proof}
	
	We are now ready to prove Proposition \ref{pro:Morrey_bound}.
	
	\begin{proof}[Proof of Proposition \ref{pro:Morrey_bound}]
		Define
		\begin{equation}\label{delta}
			\delta = \frac{\gamma}{1+2\gamma},
		\end{equation}
		recalling that $\gamma$ is the H\"older exponent in Lemma \ref{lem:Holder}. 	For any $\eps\in (0,L)$ ($\eps$ is small) and any $x_0\in \Omega$, we have from Lemmas \ref{lem:duality}, \ref{small_time} and \ref{large_time} that
		\begin{equation*}
			\eps^{-\delta}\int_{\Omega\cap [x_0-\eps,x_0+\eps]}z(x,T)dx \leq \eps^{-\delta}\intO z(x,T)\cutoff(x)dx \leq C
		\end{equation*}
		where $C$ is independent of $\eps$, $T$ and $x_0$. By definition of Morrey space we obtain
		\begin{equation*}
			\|z(\cdot,T)\|_{\M^{1,\delta}(\Omega)} \leq C
		\end{equation*}
		for all $T\in (0,T_{\max})$.
	\end{proof}

	\subsection{Interpolation inequalities involving Morrey spaces}
	Another tool to prove our main results is an interpolation inequality with Morrey space. To do that, we need a preparation.
	\begin{lemma}\label{lem-key1}
		Let $\Omega_0 \subset \Omega$ and  $\varphi: \R \to [0,1]$ be a smooth cut-off function such that $\operatorname{supp}\varphi\subset\Omega_0$. Assume also that the following inequality holds:
		\begin{equation}\label{eq-key1-1}
		|\Nx\varphi(x)|\le C_\varphi[\varphi(x)]^{1/3},\ \ x\in\R.
		\end{equation}
		Then
		\begin{equation}\label{eq-key1-2}
		\intO \varphi^2|u|^4dx \leq C\|u\|_{L^1(\Omega_0)}^2\bra{\intO \varphi^2|\Nx u|^2dx+C_\varphi^3\|u\|^2_{L^1(\Omega_0)}},
		\end{equation}
		where the constant $C$ is independent of $\varphi$ and $\Omega_0$.
	\end{lemma}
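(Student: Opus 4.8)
The plan is to prove the pointwise interpolation inequality \eqref{eq-key1-2} by a one-dimensional Gagliardo--Nirenberg-type argument applied to the compactly supported function $v:=\varphi^{2/3}|u|$ (or a suitable power of it), using the hypothesis \eqref{eq-key1-1} precisely to control the derivative hitting the cut-off. The key observation is that in dimension one, for a function $v$ vanishing outside $\Omega_0$, one has the Agmon-type bound $\|v\|_{L^\infty(\Omega)}^2 \le 2\|v\|_{L^2(\Omega)}\|\Nx v\|_{L^2(\Omega)}$, which is the engine behind all such estimates. First I would reduce to $u\ge 0$ (replace $u$ by $|u|$; the left side is unchanged and $|\Nx |u||\le |\Nx u|$ a.e.), and set $w:=\varphi^{2/3}u$, noting $\operatorname{supp} w\subset \Omega_0$.

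The main computation runs as follows. Write $\intO \varphi^2 u^4\,dx = \intO w^4\,dx = \|w^2\|_{L^2(\Omega)}^2$. Since $w$ is continuous with compact support in the interval $\Omega$, there is a point where $w^2$ vanishes, so $\|w^2\|_{L^\infty(\Omega)}\le 2\int_\Omega |w\,\Nx w|\,dx$. Hence
\begin{equation*}
\intO w^4\,dx \le \|w^2\|_{L^\infty(\Omega)}\intO w^2\,dx \le \bra{2\intO w|\Nx w|\,dx}\bra{\intO w^2\,dx}.
\end{equation*}
This is not yet in the right form — the right-hand side involves $\|w\|_{L^2}^2$ rather than $\|u\|_{L^1}^2$. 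To fix this I would instead split off an $L^\infty$ factor more aggressively: from $\intO w^4 \le \|w\|_{L^\infty}^3\|u\|_{L^1(\Omega_0)}$ (using $w\le \varphi^{2/3}u\le u$ on the support, so $w^3\le u^3$... ) — more cleanly, $\intO w^4\,dx\le \|w\|_{L^\infty(\Omega)}^3\,\|w\|_{L^1(\Omega)}\le \|w\|_{L^\infty(\Omega)}^3\,\|u\|_{L^1(\Omega_0)}$, and then estimate $\|w\|_{L^\infty(\Omega)}^3$. Since $w$ has compact support, $\|w\|_{L^\infty(\Omega)}^2\le 2\|w\|_{L^1(\Omega)}\|\Nx w\|_{L^\infty(\Omega)}$ is false in general; rather the correct elementary bound in 1D is $w(x)^3 = 3\int (\text{appropriate primitive})$, i.e. $\|w\|_{L^\infty}^3\le 3\intO w^2|\Nx w|\,dx$. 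Combining, $\intO w^4\,dx\le 3\|u\|_{L^1(\Omega_0)}\intO w^2|\Nx w|\,dx$, and then Young's inequality on $w^2|\Nx w|$ together with $w^2\le \|w\|_{L^\infty}\,w$ closes the loop after absorbing $\|w\|_{L^\infty}^3$.

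The crucial step — and the place where \eqref{eq-key1-1} is used — is estimating $\Nx w = \frac23\varphi^{-1/3}(\Nx\varphi)u + \varphi^{2/3}\Nx u$. The first term is dangerous because $\varphi^{-1/3}$ blows up where $\varphi\to 0$; but \eqref{eq-key1-1} gives $|\varphi^{-1/3}\Nx\varphi|\le C_\varphi$, so $|\Nx w|\le \frac{2}{3}C_\varphi\,u + \varphi^{2/3}|\Nx u|$ pointwise, with no singularity. Therefore $\intO w^2|\Nx w|\,dx \le \frac{2}{3}C_\varphi\intO \varphi^{4/3}u^3\,dx + \intO \varphi^2 u^2|\Nx u|\,dx$. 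The second integral is handled by Cauchy--Schwarz as $\le (\intO\varphi^2 u^4)^{1/2}(\intO\varphi^2|\Nx u|^2)^{1/2}$, producing exactly a $(\intO\varphi^2 u^4)^{1/2}$ on the right that can be absorbed into the left-hand side via Young's inequality. The first integral $\intO\varphi^{4/3}u^3\,dx\le (\intO\varphi^2 u^4)^{1/2}(\intO\varphi^{2/3}u^2\,dx)^{1/2}\le (\intO\varphi^2u^4)^{1/2}\|u\|_{L^1(\Omega_0)}^{1/2}\|u\|_{L^\infty}^{1/2}$ — but chasing the $\|u\|_{L^\infty}$ is awkward, so I expect the cleaner route is to organize the whole estimate as a quadratic inequality in $X:=(\intO\varphi^2 u^4\,dx)^{1/2}$, of the shape $X^2 \le a\|u\|_{L^1(\Omega_0)}X + b C_\varphi \|u\|_{L^1(\Omega_0)}^2 X^{?}$, and solve it. \textbf{The main obstacle} is precisely this bookkeeping: getting every term on the right to carry exactly two powers of $\|u\|_{L^1(\Omega_0)}$ and the correct power $C_\varphi^3$, while making all other factors absorbable. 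I anticipate the clean statement follows by applying the standard 1D inequality $\|g\|_{L^2}^2\le 2\|g\|_{L^1}\|\Nx g\|_{L^1}$... no — the right tool is: for compactly supported $g$ in 1D, $\|g\|_{L^\infty}\le \|\Nx g\|_{L^1}$, hence with $g=w^3$ we get $\|w\|_{L^\infty}^3\le 3\intO w^2|\Nx w|\,dx$, and $\intO w^4\le \|w\|_{L^\infty}\intO w^3\le \|w\|_{L^\infty}^{?}$... The honest summary: substitute $w=\varphi^{2/3}u$, use $\|w\|_{L^\infty}^3\le 3\intO w^2|\Nx w|$, expand $\Nx w$ using \eqref{eq-key1-1} to kill the singular term, apply Cauchy--Schwarz twice, and finish with Young's inequality to absorb; the constant $C$ is universal because $\varphi$ enters only through $C_\varphi$ and the volume bound $|\Omega_0|$ never appears (all estimates are done in $L^1(\Omega_0)$ directly).
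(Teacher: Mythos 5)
Your overall strategy is the same circle of ideas as the paper's proof: work with $w=\varphi^{2/3}u$, get a one-dimensional $L^\infty$ (Agmon/Newton--Leibniz) bound, use \eqref{eq-key1-1} to tame the term where the derivative hits the cut-off, bound $\intO \varphi^2u^4dx\le \|u\|_{L^1(\Omega_0)}\|w\|^3_{L^\infty(\Omega_0)}$, and absorb. But as written the argument has two concrete defects. First, the identity $\intO\varphi^2|u|^4dx=\intO w^4dx$ is false (one has $w^4=\varphi^{8/3}u^4\le\varphi^2u^4$, the wrong direction); what you actually need, and what repairs this, is $\varphi^2u^4=u\,w^3$, hence $\intO\varphi^2u^4dx\le\|u\|_{L^1(\Omega_0)}\|w\|^3_{L^\infty(\Omega_0)}$. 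Second, and more seriously, you never close the estimate: after $\|w\|^3_{L^\infty}\le 3\intO w^2|\Nx w|dx$ and the expansion of $\Nx w$, the commutator term $C_\varphi\intO\varphi^{4/3}u^3dx$ remains, and the treatment you write down (Cauchy--Schwarz against $\intO\varphi^{2/3}u^2dx$) produces a factor $\|u\|_{L^\infty}$ which, as you yourself note, cannot be chased; you then only conjecture that ``a quadratic inequality in $X$'' with the right powers of $\|u\|_{L^1(\Omega_0)}$ and $C_\varphi$ will come out, without verifying it. Since producing exactly the weights $\|u\|_{L^1(\Omega_0)}^2$ and $C_\varphi^3$ is the whole content of the lemma, this is a genuine gap, not mere bookkeeping.

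The gap is fillable within your route: bound the commutator term by $\intO\varphi^{4/3}u^3dx=\intO w^2u\,dx\le \|w\|_{L^\infty}^2\|u\|_{L^1(\Omega_0)}$, so that with $M:=\|w\|_{L^\infty}$, $m:=\|u\|_{L^1(\Omega_0)}$, $X^2:=\intO\varphi^2u^4dx$ and $G^2:=\intO\varphi^2|\Nx u|^2dx$ you get $M^3\le 2C_\varphi mM^2+3XG$; either $M\le 4C_\varphi m$ or the first term is absorbed, so in all cases $M^3\le 6XG+64C_\varphi^3m^3$, whence $X^2\le mM^3\le 6mXG+64C_\varphi^3m^4$ and Young's inequality gives \eqref{eq-key1-2}. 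The paper avoids this case distinction by applying Newton--Leibniz to $\varphi^{4/3}u^2=w^2$ rather than to $w^3$: this gives $\|w^2\|_{L^\infty}\le 2\|\varphi\Nx u\|_{L^2}\|\varphi^{1/3}u\|_{L^2}+\tfrac43 C_\varphi\|\varphi^{1/3}u\|_{L^2}^2$, then $\|\varphi^{1/3}u\|_{L^2}^2\le m\|w\|_{L^\infty}$ turns this into an inequality for $\|w\|_{L^\infty}$ alone, solvable directly to yield $\|w\|_{L^\infty}^2\le C\bra{m^{2/3}\|\varphi\Nx u\|_{L^2}^{4/3}+C_\varphi^2m^2}$, after which $\intO\varphi^2u^4dx\le m\|w\|_{L^\infty}^3$ finishes the proof. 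So: right idea and right ingredients, but your write-up stops exactly where the real work is, and the one quantitative step you do commit to is the one that fails.
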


	\begin{proof}
		We start with the classical interpolation between $L^2$ and $H^1$. Namely, by Newton-Leibnitz,
		$$
		\varphi^{4/3}(x)|u(x)|^2=2\int_{-\infty}^x \varphi^{4/3}(x)u(x)u'(x)\,dx+\frac43\int_{-\infty}^x\varphi^{1/3}(x)\varphi'(x)|u(x)|^2\,dx,
		$$
		and, by the Cauchy-Schwarz inequality together with \eqref{eq-key1-1},
		\begin{equation*}
		\|\varphi^{4/3}u^2\|_{L^\infty(\Omega_0)}\le 2\|\varphi \Nx u\|_{L^2(\Omega_0)}\|\varphi^{1/3}u\|_{L^2(\Omega_0)}+\frac43C_\varphi \|\varphi^{1/3}u\|^2_{L^2(\Omega_0)}.
		\end{equation*}
		
		At the next step, we use the obvious estimate
		\begin{equation*}
		\intO \varphi^{2/3}u^2 dx\le \|u\|_{L^1(\Omega_0)}\|\varphi^{2/3}u\|_{L^\infty(\Omega_0)}
		\end{equation*}
		in order to get
		\begin{equation*}
		\begin{split}
		\|\varphi^{4/3}u^2\|_{L^\infty(\Omega_0)}&\le 2\(\|u\|_{L^1(\Omega_0)}\|\varphi^{4/3}u^2\|_{L^\infty(\Omega_0)}^{1/2}\)^{1/2}\|\varphi\Nx u\|_{L^2(\Omega_0)}\\
		&\quad+\frac43 C_\varphi\|u\|_{L^2(\Omega_0)}\|\varphi^{4/3}u^2\|_{L^\infty(\Omega_0)}^{1/2}.
		\end{split}
		\end{equation*}
		
		This gives the interpolation inequality
		\begin{equation*}
		\|\varphi^{2/3}u\|_{L^\infty(\Omega)}^2\le C\(\|u\|_{L^1(\Omega_0)}^{2/3}\|\varphi\Nx u\|_{L^2(\Omega_0)}^{4/3}+C_\varphi^2\|u\|_{L^1(\Omega_0)}^2\).
		\end{equation*}
		Finally, we have
		\begin{equation*}
		\intO \varphi^2u^4 dx \leq \|u\|_{L^1(\Omega_0)}\|\varphi^{2/3}u\|_{L^\infty(\Omega_0)}^3,
		\end{equation*}
		which gives \eqref{eq-key1-2} and finishes the proof of the Lemma.
	\end{proof}
	The following lemma plays a crucial role in proving our analysis.
	\begin{proposition}[Interpolation inequality with Morrey spaces]\label{pro:interp_Morrey}
		Let $\delta>0$. Then for any $\eps>0$, there exists $C(\eps,\delta)>0$ such that
		\begin{equation*}
			\intO |u|^4dx \leq \eps \|u\|_{\M^{1,\delta}(\Omega)}^2\|\Nx u\|_{L^2(\Omega)}^2 + C(\eps,\delta)\|u\|_{L^1(\Omega)}^4.
		\end{equation*}
	\end{proposition}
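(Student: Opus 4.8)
The plan is to deduce the global (no cut-off) interpolation inequality from the localized one in Lemma~\ref{lem-key1} by partitioning $\Omega = (0,L)$ into finitely many overlapping pieces of a fixed length and summing. First I would fix a length scale $\rho\in(0,L)$ (to be chosen later, depending only on $\eps$ and $\delta$), cover $\overline\Omega$ by intervals $I_k = (x_k-\rho, x_k+\rho)\cap\Omega$ with $x_k$ a $\rho$-net in $\overline\Omega$, so that the $I_k$ have bounded overlap (at most a universal constant $N_0$, independent of $\rho$) and $N\sim L/\rho$ of them suffice. Then I would take a subordinate partition of unity $\{\psi_k\}$ with $\psi_k\ge 0$, $\sum_k\psi_k\equiv 1$ on $\Omega$, $\operatorname{supp}\psi_k\subset I_k$, and crucially $\psi_k$ chosen so that $\varphi_k:=\psi_k$ satisfies the structural condition \eqref{eq-key1-1}, i.e. $|\Nx\varphi_k|\le C_\varphi[\varphi_k]^{1/3}$ with $C_\varphi\lesssim \rho^{-1}$. (Such cut-offs exist: taking $\varphi_k = \theta_k^3$ for a smooth $\theta_k$ with $0\le\theta_k\le1$, $|\Nx\theta_k|\lesssim\rho^{-1}$ gives $|\Nx\varphi_k| = 3\theta_k^2|\Nx\theta_k|\lesssim \rho^{-1}\varphi_k^{2/3}\le\rho^{-1}\varphi_k^{1/3}$; and the $\theta_k^3$ can still be arranged to sum to $1$, or one first builds a smooth partition of unity $\{\eta_k\}$ and sets $\psi_k = \eta_k/\sum_j\eta_j$ after replacing each $\eta_k$ by a cube.)

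Next I would apply Lemma~\ref{lem-key1} with $\Omega_0 = I_k$, $\varphi = \sqrt{\psi_k}$ — wait, more carefully: the lemma needs $\varphi^2|u|^4$ on the left, so I apply it with the cut-off $\varphi_k$ such that $\varphi_k^2 = \psi_k$, i.e. $\varphi_k = \psi_k^{1/2}$, and I must check $|\Nx\varphi_k|\le C_\varphi\varphi_k^{1/3}$; this is arranged by building $\psi_k$ as a $6$th power of a Lipschitz-in-scale-$\rho$ bump. For each $k$ the lemma yields
\begin{equation*}
	\intO \psi_k|u|^4\,dx \le C\|u\|_{L^1(I_k)}^2\Bigl(\intO \psi_k|\Nx u|^2\,dx + C\rho^{-3}\|u\|_{L^1(I_k)}^2\Bigr).
\end{equation*}
Now the Morrey bound enters: since $I_k$ has length $\le 2\rho$, the definition \eqref{def_Morrey} gives $\|u\|_{L^1(I_k)}\le \rho^{\delta}\|u\|_{\M^{1,\delta}(\Omega)}$ for every $k$. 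I would use this to replace the prefactor $\|u\|_{L^1(I_k)}^2$ multiplying the gradient term by $\rho^{2\delta}\|u\|_{\M^{1,\delta}(\Omega)}^2$, while keeping one genuine $L^1$ factor where it helps. Summing over $k$, using $\sum_k\psi_k\equiv1$ on the left, bounded overlap on the right (so $\sum_k\intO\psi_k|\Nx u|^2 \le N_0\|\Nx u\|_{L^2(\Omega)}^2$ and $\sum_k\|u\|_{L^1(I_k)}^2\le N_0\|u\|_{L^1(\Omega)}^2$... actually $\sum_k\|u\|_{L^1(I_k)}\le N_0\|u\|_{L^1(\Omega)}$, and for the sum of squares one bounds $\sum_k\|u\|_{L^1(I_k)}^2\le\bigl(\sum_k\|u\|_{L^1(I_k)}\bigr)^2\le N_0^2\|u\|_{L^1(\Omega)}^2$), I arrive at
\begin{equation*}
	\intO|u|^4\,dx \le C N_0\,\rho^{2\delta}\|u\|_{\M^{1,\delta}(\Omega)}^2\|\Nx u\|_{L^2(\Omega)}^2 + C N_0^2\,\rho^{2\delta-3}\|u\|_{\M^{1,\delta}(\Omega)}^2\|u\|_{L^1(\Omega)}^2.
\end{equation*}

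Finally I would absorb the constants: choose $\rho$ small enough that $CN_0\rho^{2\delta}\le\eps$, which fixes $\rho=\rho(\eps,\delta)$; the second term then has a fixed constant $C(\eps,\delta) := CN_0^2\rho^{2\delta-3}$ and equals $C(\eps,\delta)\|u\|_{\M^{1,\delta}(\Omega)}^2\|u\|_{L^1(\Omega)}^2$. To finish, use the elementary inequality $\|u\|_{\M^{1,\delta}(\Omega)}\le C_\delta\|u\|_{L^1(\Omega)}$ valid on the bounded domain $\Omega=(0,L)$ — indeed for $\eps\ge L$ the scale is the whole domain, and one can always bound $\eps^{-\delta}\|u\|_{L^1(\cdot)}$ by combining small scales trivially or simply noting $\sup_{\eps<L}\eps^{-\delta}\ge L^{-\delta}$ handles large $\eps$ after splitting — more cleanly, $\|u\|_{\M^{1,\delta}(\Omega)}^2\|u\|_{L^1(\Omega)}^2\le \|u\|_{L^1(\Omega)}^4$ up to a constant depending on $L,\delta$, which is absorbed into $C(\eps,\delta)$. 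This yields exactly the claimed inequality. The main obstacle I anticipate is purely technical: constructing the partition of unity $\{\psi_k\}$ whose square-roots satisfy the nonstandard gradient bound \eqref{eq-key1-1} with the right $\rho$-dependence in $C_\varphi$, and carefully tracking how $\rho$ enters so the final absorption is legitimate; the rest is bookkeeping.
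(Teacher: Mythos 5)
Your overall strategy coincides with the paper's proof: cover $\Omega=(0,L)$ by intervals of a small length scale $\rho$, take a partition of unity whose squares sum to one and whose members satisfy the gradient condition \eqref{eq-key1-1} with $C_\varphi\lesssim\rho^{-1}$, apply Lemma \ref{lem-key1} on each piece, insert the Morrey norm through $\|u\|_{L^1(I_k)}\le C\rho^{\delta}\|u\|_{\M^{1,\delta}(\Omega)}$ in the prefactor of the gradient term, and sum using $\sum_k\psi_k\equiv 1$. Up to that point the argument is sound, and your worry about constructing cut-offs with $|\Nx\varphi_k|\le C\rho^{-1}[\varphi_k]^{1/3}$ is only a routine technicality (normalised high powers of standard bumps do the job), which the paper likewise treats as such.

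Your final step, however, contains a genuine error: the claimed inequality $\|u\|_{\M^{1,\delta}(\Omega)}\le C_\delta\|u\|_{L^1(\Omega)}$ is false. By the definition \eqref{def_Morrey} the Morrey norm takes a supremum over arbitrarily small scales, so it dominates the $L^1$ norm (indeed $\|u\|_{L^1(\Omega)}\le C(L,\delta)\,\|u\|_{\M^{1,\delta}(\Omega)}$), not the reverse: an approximate Dirac mass of unit $L^1$ norm supported on an interval of length $\eps$ has Morrey norm of order $\eps^{-\delta}\to\infty$. Your attempted justification only addresses the harmless large-scale part of the supremum; the failure is at small scales, which is exactly the concentration phenomenon the Morrey norm is designed to detect, and which cannot be controlled by $\|u\|_{L^1}$ alone. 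The mistake is self-inflicted and easily repaired within your own scheme: do not insert the Morrey norm into the lower-order term at all. Bound $\|u\|_{L^1(I_k)}^4\le\|u\|_{L^1(\Omega)}^4$, sum over the finitely many ($\sim L/\rho$) pieces, and absorb the factor $C_\varphi^3\sim\rho^{-3}$ together with the number of pieces into $C(\eps,\delta)$ once $\rho=\rho(\eps,\delta)$ has been fixed by the requirement that the coefficient of the gradient term be at most $\eps$. This is precisely how the paper handles that term, and with this modification your argument becomes the paper's proof.
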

	\begin{proof}
		We present the proof for the case $u\in H_0^1(\Omega)$, since for $u\in H^1(\Omega)$ it can be treated by using the extension operator from $H^1(\Omega)$ to $H^1_0(2\Omega)$. There exist $x_1, x_2,\ldots, x_k\in \Omega$ such that \begin{equation*}
			\Omega = \bigcup_{j=1,\ldots, k}\Omega_{j} \quad\text{with}\quad \Omega_j := \Omega\cap (x_j-\eps,x_j+\eps).
		\end{equation*}
		Let $\varphi_j:\Omega\to [0,1]$ such that $\mathrm{supp}(\varphi_j)\subset \Omega_j$ and $			\sum_{j=1}^{k}\varphi_j^2(x) = 1, \forall x\in\Omega$.
		Moreover, we assume that for each $j\in \{1,\ldots, k\}$ there is $C_j = C_j(\eps)>0$ with
		\begin{equation*}
			|\Nx \varphi_j(x)| \leq C_j[\varphi_j(x)]^{1/3} \quad \forall x\in R.
		\end{equation*}
		Now we can apply Lemma \ref{lem-key1} to estimate
		\begin{equation}\label{f4}
		\begin{aligned}
			\intO |u|^4dx &= \sum_{j=1}^{k}\intO \varphi_j^2|u|^4dx\leq C\sum_{j=1}^{k}\sbra{\|u\|_{L^1(\Omega_j)}^2 \intO \varphi_j^2|\Nx u|^2dx + C_j^3\|u\|_{L^1(\Omega_j)}^4}.
		\end{aligned}
		\end{equation}
		On the one hand
		\begin{equation*}
			\|u\|_{L^1(\Omega_j)}^2 = \eps^{2\delta}\sbra{\eps^{-\delta}\int_{\Omega_j} |u|dx}^2\leq \eps^{2\delta}\|u\|_{\M^{1,\delta}(\Omega)}^2,
		\end{equation*}
		which implies
		\begin{align*}
			\sum_{j=1}^k\|u\|_{L^1(\Omega_j)}^2\intO \varphi_j^2 |\Nx u|^2dx &\leq \eps^{2\delta}\|u\|_{\M^{1,\delta}(\Omega)}^2\intO |\Nx u|^2\sum_{j=1}^{k}\varphi_j^2dx \eps^{2\delta}\|u\|_{\M^{1,\delta}(\Omega)}^2\intO |\Nx u|^2dx.
		\end{align*}
		On the other hand
		\begin{equation*}
			\sum_{j=1}^{k}C_j^3\|u\|_{L^1(\Omega_j)}^4 \leq C(\eps,k)\|u\|_{L^1(\Omega)}^4.			
		\end{equation*}
		Therefore, we finally obtain from \eqref{f4} that
		\begin{equation*}
			\intO |u|^4dx \leq C\eps^{2\delta}\|u\|_{\M^{1,\delta}(\Omega)}^2 \|\Nx u\|^2 + C(\eps,k)\|u\|_{L^1(\Omega)}^4,
		\end{equation*}
		which finishes the proof of Proposition \ref{pro:interp_Morrey}.
	\end{proof}

\section{Proof of Theorem \ref{thm:mass_conservation}}\label{sec:second_thm}
Following \cite{sun2021regularity,fitzgibbon2021reaction}, we consider the $L^p$-energy function for $p\in \mathbb N$,
\begin{equation}\label{EE}
	\EE_p[u]:= \sum_{\beta\in \mathbb Z_+^m,|\beta| = p}\begin{pmatrix} p\\\beta\end{pmatrix}\theta^{\beta^2}\intO \prod_{i=1}^mu_i^{\beta_i}dx
\end{equation}
where $\theta \in (0,\infty)^m$ are to-be-chosen coefficients, and we use the notation
\begin{equation*}
	\begin{pmatrix} p\\ \beta \end{pmatrix} = \frac{p!}{\beta_1!\beta_2!\cdots \beta_m!}, \quad \text{ and } \quad \theta^{\beta^2} = \prod_{i=1}^{m}\theta_i^{\beta_i^2}.
\end{equation*}
Thanks to the non-negativity of $u_i$, there exists $\lambda = \lambda(p,\theta)>0$ such that
\begin{equation}\label{norm_equivalent}
	\lambda^{-1}\sumi \|u_i\|_{L^p(\Omega)}^p \leq \EE_p[u] \leq \lambda \sumi \|u_i\|_{L^p(\Omega)}^p.
\end{equation}
The following Lemma was proved in \cite[Lemma 2.5]{fitzgibbon2021reaction}.
\begin{lemma}\label{lem:energy}
	Assume \eqref{A1} and \eqref{A3}. Then for any $2\leq p \in \mathbb{Z}$, there exists $\theta \in (0,\infty)^m$ such that the function $\EE_p[u]$ defined in \eqref{EE} satisfies
	\begin{equation*}
		\frac{d}{dt}\EE_p[u(t)] + \alpha_p\sumi \intO \left|\Nx(u_i(t)^{p/2}) \right|^2dx \leq C\bra{1 + \sumi \intO u_i(t)^{p-1+r}dx}
	\end{equation*}
	with $u$ is the solution to \eqref{eq1.1}, where $\alpha_p, C> 0$.
\end{lemma}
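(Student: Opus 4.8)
The plan is to differentiate $\EE_p[u(t)]$ along solutions, substitute \eqref{eq1.1}, and split the result into a diffusion and a reaction contribution. Writing $H(u):=\sum_{|\beta|=p}\binom p\beta\theta^{\beta^2}\prod_{i=1}^m u_i^{\beta_i}$ for the integrand of $\EE_p$, one gets
$$\frac{d}{dt}\EE_p[u(t)]=\intO\sum_{k=1}^m\pa_{u_k}H(u)\Bigl(d_k\pa_{xx}u_k+f_k(x,t,u)\Bigr)\,dx.$$
First I would treat the diffusion part: integrating by parts in $x$ and discarding the boundary terms by the Neumann conditions rewrites it as $-\intO\sum_{k,l=1}^m d_k\,\pa^2_{u_ku_l}H(u)\,\pa_xu_k\,\pa_xu_l\,dx$. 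The diagonal entries $k=l$ already produce the dissipative term we want: since $\pa^2_{u_ku_k}H(u)\ge \theta_k^{p^2}p(p-1)u_k^{p-2}$ (retaining only the monomial $\beta=pe_k$ and discarding the remaining non-negative ones) and $u_k^{p-2}|\pa_xu_k|^2=\tfrac{4}{p^2}|\pa_x(u_k^{p/2})|^2$, the diagonal part is bounded above by $-\,c(p)\sum_k d_k\theta_k^{p^2}\intO|\pa_x(u_k^{p/2})|^2\,dx$. The off-diagonal terms $k\ne l$ carry no sign; the key point is that for a suitable choice of the coefficient vector $\theta=\theta(p)\in(0,\infty)^m$ they are absorbed into the diagonal dissipation by Cauchy--Schwarz and weighted AM--GM, at the cost of reducing the constant in front of $\sum_k\intO|\pa_x(u_k^{p/2})|^2$. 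This absorption is exactly what the super-quadratic weight $\theta^{\beta^2}$ is designed for: the exponent $\beta_i^2$ (rather than $\beta_i$) makes the mixed second derivatives quantitatively small compared with the pure ones.

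Next I would bound the reaction contribution $\intO\sum_k\pa_{u_k}H(u)\,f_k(x,t,u)\,dx$ pointwise, which is where \eqref{A3} is used. Since each $\pa_{u_k}H(u)$ is a polynomial of degree $p-1$ with non-negative coefficients and $A$ is lower triangular with positive diagonal, hence invertible with lower-triangular inverse, one can rewrite
$$\sum_{k=1}^m\pa_{u_k}H(u)\,f_k=\sum_{i=1}^m c_i(u)\Bigl(\sum_{j=1}^i a_{ij}f_j\Bigr),\qquad c_i(u)=\sum_{j\ge i}(A^{-1})_{ji}\,\pa_{u_j}H(u),$$
where the claimed identity follows from $\sum_{i}a_{ij}(A^{-1})_{li}=(A^{-1}A)_{lj}=\delta_{lj}$, and the $c_i(u)$ are again polynomials of degree $p-1$. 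Choosing the sizes and ordering of the $\theta_i$ compatibly with the triangular structure of $A$ (so that $\pa_{u_i}H$ dominates $\pa_{u_j}H$ for $j>i$) forces $c_i(u)\ge 0$; then \eqref{A3} gives $\sum_{j\le i}a_{ij}f_j\le C\bra{1+\sum_k u_k}^r$, and since $0\le c_i(u)\le C\bra{1+\sum_k u_k}^{p-1}$ we obtain $\sum_k\pa_{u_k}H(u)f_k\le C\bra{1+\sum_k u_k}^{p-1+r}\le C\bra{1+\sum_k u_k^{p-1+r}}$. Integrating over $\Omega$ yields precisely the right-hand side of the stated inequality, and combining with the diffusion estimate finishes the argument.

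The genuine obstacle is the joint choice of $\theta$: it must be large enough, in a way depending on the $d_i$ and on $p$, for the diffusion cross-terms to be dominated so that a positive $\alpha_p$ survives, while simultaneously respecting the ordering dictated by the rows of $A$ so that every $c_i(u)$ comes out non-negative. I would fix the $\theta_i$ recursively in $i=1,\dots,m$, at each step choosing $\theta_i$ inside an admissible window determined by $\theta_1,\dots,\theta_{i-1}$, the $d_i$ and $p$; the quadratic exponent $\beta_i^2$ creates just enough separation between consecutive scales for all constraints to be met at once. This is the content of \cite[Lemma~2.5]{fitzgibbon2021reaction}, whose proof I would follow; the present one-dimensional setting with diffusion $d_i\pa_{xx}u_i$ requires no change to the algebra, only the obvious form of the integration by parts.
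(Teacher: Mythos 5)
Your proposal is correct and takes essentially the same route as the paper, which does not reprove this lemma but simply invokes \cite[Lemma 2.5]{fitzgibbon2021reaction}: the $\theta^{\beta^2}$-weighted $L^p$-energy, positivity of the resulting quadratic form in the gradients for the diffusion part, and rewriting $\sum_k\partial_{u_k}H\,f_k$ through the lower-triangular matrix $A$ so that \eqref{A3} applies. Your recursive choice of $\theta$ (large enough to make the diffusion matrices positive definite, with enough separation between consecutive $\theta_i$ that the coefficients $c_i(u)$ stay non-negative) is precisely the mechanism of the cited proof.
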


%\begin{lemma}\label{lem:L2}
%	Assume \eqref{A1}, \eqref{A2'} and \eqref{A3} with $r = 3$. Let $u$ be the solution to \eqref{eq1.1}. Then,
%	\begin{equation*}
%		\|u_i\|_{L^p(0,T;L^p(\Omega))} \leq C(T, p, \text{ initial data})
%		%\|u_i\|_{L^{\infty}(0,T;L^2(\Omega)) \cap L^2(0,T;H^1(\Omega))} \leq C(T, \text{ initial data}),
%	\end{equation*}
%	for all $p\geq 1$, for all $ T\in (0,T_{\max})$ and all $i=1,\ldots, m$.
%\end{lemma}
\begin{proof}[\textbf{Proof of Theorem \ref{thm:mass_conservation}}]%: Global existence
	We choose $p=2$ in Lemma \ref{lem:energy}, noting that $r=3$, to get
	\begin{equation}\label{f5}
		\frac{d}{dt}\EE_2[u(t)] + \alpha_2\sumi\|\Nx u_i\|_{L^2(\Omega)}^2 \leq C_0\bra{1 + \sumi \|u_i\|_{L^4(\Omega)}^{4}}
	\end{equation}
	with some $C_0>0$. Thanks to Propositions \ref{pro:Morrey_bound} and \ref{pro:interp_Morrey}, by choosing $\eps>0$ small enough we have
	\begin{equation*}
		\|u_i\|_{L^4(\Omega)}^4 \leq \eps \|u_i\|_{\M^{1,\delta}(\Omega)}^2 \|\Nx u_i\|_{L^2(\Omega)}^2 + C(\eps)\|u_i\|_{L^1(\Omega)}^4 \leq \frac{\alpha_2}{2C_0}\|\Nx u_i\|_{L^2(\Omega)}^2 + C(\eps,\delta).
	\end{equation*}
	Inserting this into \eqref{f5} gives
	\begin{equation*}
		\frac{d}{dt}\EE_2[u(t)] + \frac{\alpha_2}{2}\sum_{i=1}^{m}\|\Nx u_i(t)\|_{L^2(\Omega)}^2 \leq C(\eps,\delta).
	\end{equation*}
	By adding both sides with $\frac{\alpha_2}{2}\sum_{i=1}^m\|u_i\|_{L^2(\Omega)}^2$ and using the interpolation inequalities $\|u_i\|_{L^2(\Omega)}^2 \le \frac{\alpha_2}{2}\|\pa_xu_i\|_{L^2(\Omega)}^2 + C(\alpha_2)\|u_i\|_{L^1(\Omega)}^2$, we obtain
	\begin{equation*}
		\frac{d}{dt}\EE_2[u(t)]+\frac{\alpha_2}{2}\sum_{i=1}^m\|u_i\|_{L^2(\Omega)}^2 \le C.
	\end{equation*}
	Now \eqref{norm_equivalent} and the classical Gronwall lemma give
	\begin{equation*}
		\|u_i\|_{L^{\infty}(0,T;L^2(\Omega))} \leq C(\eps, \delta) \quad \forall i=1,\ldots, m.
	\end{equation*}
	The global existence and uniform-in-time boundedness of a unique classical  solution then follows from \cite[Theorem 1.3]{fitzgibbon2021reaction}.
\end{proof}

\section{Extensions}\label{sec:extensions}
\subsection{Proof of Theorem \ref{thm:mass_control}}\label{sec:first_thm}
\begin{proof}
 We employ the idea from \cite{fellner2020global} with the change of variables
\begin{equation*}
  \begin{split}
    y_i(x,t)=e^{-k_1t}u_i(x,t) \quad \text{or equivalently}  \quad u_i(x,t)=e^{k_1t}y_i(x,t).
  \end{split}
\end{equation*}

Then the system \eqref{eq1.1} gives
\begin{equation*}
  \begin{split}
    \partial_ty_i=d_i \pa_{xx}y_i + e^{-k_1t}(f_i(x,t,u)-k_1e^{k_1t}y_i):= d_i\pa_{xx} y_i+g_i(x,t,y).
  \end{split}
\end{equation*}

It is clear that
\begin{equation*}
  \begin{split}
    \sum^m_{i=1}g_i(x,t,y)=e^{-k_1t}\sum^m_{i=1}(f_i(x,t,u)-k_1u_i)\leq e^{-k_1t}k_0
  \end{split}
\end{equation*}
thanks to \eqref{mass_control}. The global existence of this system now follows from Theorem \ref{thm:main} with slight modifications. Changing back to variables $u_i(x,t) = e^{k_1t}y_i(x,t)$, we obtain the global existence of \eqref{eq1.1}. Note that we do not have yet the uniform-in-time bounds of $y_i$ but only a bound which grows at most polynomial in time (see e.g. \cite{fellner2020global}). Therefore, in the case $k_1<0$, we get the uniform-in-time bounds
\begin{equation*}
	\limsup_{t\to\infty}\|u_i(t)\|_{L^\infty(\Omega)} <+\infty \quad \forall i=1,\ldots, m.
\end{equation*}
The case $k_0 = k_1 =0$ is included in Theorem \ref{thm:main}.
%
%
%Let $y_{m+1}$ be the solution to
%\begin{equation}\label{eq-m+1}
%  \begin{cases}
%  	\pa_t y_{m+1} - \pa_{xx}y_{m+1} = g_{m+1}(x,t,y):= k_0e^{-k_1t} - \sumi g_i(x,t,y) \geq 0,\\
%  	\pa_x y_{m+1}(0,t) = \Nx y_{m+1}(L,t) = 0,\\
%  	y_{m+1}(x,0) = 0.
%  \end{cases}
%\end{equation}
%By writing $\widehat{y} = (y_1,\ldots, y_m, y_{m+1})$ we get the following system
%\begin{equation}\label{eq-y}
%  \begin{cases}
%    \pa_t y_i-d_i\pa_{xx}y_i=g_i(x,t,\widehat{y}), &i=1,\ldots, m+1,\\
%    \pa_xy_i(0,t)=\pa_xy_i(L,t)=0, &i=1,\ldots, m+1,\\
%    y_i(x,0)=u_{i,0}(x), &i=1,2,\cdots,m,\\
%    y_{m+1}(x,0)=0.
%  \end{cases}
%\end{equation}
%It is straightforward to check that \eqref{eq-y} satisfies \eqref{A1}, \eqref{A2'} and \eqref{A3} with $r = 3$. Therefore, Theorem \ref{thm:mass_conservation} ensures that \eqref{eq-y} has a unique global bounded solution. This implies the global existence  of classical solution to \eqref{eq1.1}.
%
%\medskip
%If $k_0 = k_1 = 0$ in \eqref{A2}, then $y_i(x,t) = u_i(x,t)$. Moreover, it holds
%\begin{equation*}
%	\sum_{i=1}^{m+1}g_i(x,t,\widetilde{y}) = 0.
%\end{equation*}
%Thus, Theorem \ref{th_super-cub_equation} gives
%\begin{equation*}
%	\sup_{t\geq 0}\|u_i(t)\|_{L^{\infty}(\Omega)} = \sup_{t\geq 0}\|y_i(t)\|_{L^{\infty}(\Omega)} < +\infty, \forall i=1,\ldots, m,
%\end{equation*}
%which finishes the proof of Theorem \ref{th_super-cubic_mass_control}.
\end{proof}

\subsection{Proof of Theorem \ref{thm:super_cubic}}
%It remains to prove Theorem \ref{thm:super_cubic}.

\begin{lemma}\label{lem-key2}
Let $\varphi: \R \to [0,1]$ be a smooth function such that $\operatorname{supp}\varphi\subset\Omega_0\subset \Omega$ and the following inequality holds
\begin{equation}\label{e1}
|\Nx\varphi(x)|\le C_\varphi[\varphi(x)]^{\frac{1+\delta}{3+\delta}},\quad x\in\R.
\end{equation}
Then
\begin{equation}\label{eq-key2-2}
\inner{\varphi^2}{|u|^{4+\delta}}\le C\|u\|_{L^1(\Omega_0)}^{2-\delta}\| u\|^{2\delta}_{L^2(\Omega_0)}\inner{\varphi^2}{|\Nx u|^2}+CC_\varphi^{3+\delta} \|u\|_{L^1(\Omega_0)}^{4+\delta},
\end{equation}
where the constant $C$ is independent of $\varphi$ and $\Omega_0$. Hereafter, we denote by $\langle\cdot,\cdot\rangle$ the usual inner product of $L^2(\Omega)$.
\end{lemma}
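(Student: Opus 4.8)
The plan is to rerun the proof of Lemma \ref{lem-key1}, replacing the exponent $2/3$ there by $a:=2/(3+\delta)$ everywhere, and to insert one additional interpolation step that manufactures the mixed factor $\|u\|_{L^1(\Omega_0)}^{2-\delta}\|u\|_{L^2(\Omega_0)}^{2\delta}$. Since $a(3+\delta)=2$, one has $\varphi^2|u|^{3+\delta}=(\varphi^a|u|)^{3+\delta}$, hence
\[
\inner{\varphi^2}{|u|^{4+\delta}}=\int_{\Omega_0}|u|\,(\varphi^a|u|)^{3+\delta}\,dx\le \|u\|_{L^1(\Omega_0)}\,M^{3+\delta},\qquad M:=\|\varphi^a u\|_{L^\infty(\Omega_0)},
\]
so it suffices to show $M^{3+\delta}\le C\|u\|_{L^1(\Omega_0)}^{1-\delta}\|u\|_{L^2(\Omega_0)}^{2\delta}\inner{\varphi^2}{|\Nx u|^2}+CC_\varphi^{3+\delta}\|u\|_{L^1(\Omega_0)}^{3+\delta}$.

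First I would write the Newton--Leibniz identity for $(\varphi^a u)^2=\varphi^{2a}u^2$ (legitimate because $\operatorname{supp}\varphi\subset\Omega_0$),
\[
\varphi^{2a}(x)u(x)^2=2\int_{-\infty}^{x}\varphi^{2a}uu'\,dy+2a\int_{-\infty}^{x}\varphi^{2a-1}\varphi'\,u^2\,dy,
\]
then estimate the first integral by Cauchy--Schwarz after the split $\varphi^{2a}=\varphi^{2a-1}\cdot\varphi$ (the device that makes $\|\varphi\Nx u\|_{L^2}$ appear, exactly as in Lemma \ref{lem-key1}), and the second using \eqref{e1}: since $2a-1=(1-\delta)/(3+\delta)$ and $(1-\delta)/(3+\delta)+(1+\delta)/(3+\delta)=a$, one gets $\varphi^{2a-1}|\varphi'|\le C_\varphi\varphi^a$. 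This produces
\[
M^2\le 2\,\|\varphi^{2a-1}u\|_{L^2(\Omega_0)}\,\|\varphi\Nx u\|_{L^2(\Omega_0)}+2a\,C_\varphi\int_{\Omega_0}\varphi^a u^2\,dx.
\]

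The new ingredient is the bound on the two leftover integrals. For $\|\varphi^{2a-1}u\|_{L^2(\Omega_0)}^2=\int_{\Omega_0}\varphi^{2(2a-1)}u^2\,dx$, I would write $\varphi^{2(2a-1)}=(\varphi^a)^{1-\delta}$, apply H\"older to get $\int_{\Omega_0}(\varphi^a|u|)^{1-\delta}|u|^{1+\delta}\,dx\le M^{1-\delta}\|u\|_{L^{1+\delta}(\Omega_0)}^{1+\delta}$, and then interpolate $\|u\|_{L^{1+\delta}}\le\|u\|_{L^1}^{(1-\delta)/(1+\delta)}\|u\|_{L^2}^{2\delta/(1+\delta)}$; this gives $\|\varphi^{2a-1}u\|_{L^2(\Omega_0)}^2\le M^{1-\delta}\|u\|_{L^1(\Omega_0)}^{1-\delta}\|u\|_{L^2(\Omega_0)}^{2\delta}$. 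For the second integral the crude estimate $\int_{\Omega_0}\varphi^a u^2\,dx\le M\|u\|_{L^1(\Omega_0)}$ (as in Lemma \ref{lem-key1}) is enough. Substituting, I obtain an algebraic inequality $M^2\le A\,M^{(1-\delta)/2}+B\,M$ with $A=2\|u\|_{L^1}^{(1-\delta)/2}\|u\|_{L^2}^{\delta}\|\varphi\Nx u\|_{L^2}$ and $B=2aC_\varphi\|u\|_{L^1}$; two applications of Young's inequality --- one with the conjugate pair $\big(4/(1-\delta),\,4/(3+\delta)\big)$ to absorb $A\,M^{(1-\delta)/2}$ into $\tfrac14 M^2$, and one with exponent $2$ to absorb $B\,M$ into $\tfrac14 M^2$ --- let me solve for $M^2$, giving
\[
M^2\le C\,\|u\|_{L^1}^{\frac{2(1-\delta)}{3+\delta}}\|u\|_{L^2}^{\frac{4\delta}{3+\delta}}\|\varphi\Nx u\|_{L^2}^{\frac{4}{3+\delta}}+CC_\varphi^{2}\|u\|_{L^1}^{2}.
\]
Raising this to the power $(3+\delta)/2$ (and using $(x+y)^s\le 2^s(x^s+y^s)$) collapses the exponents to exactly $M^{3+\delta}\le C\|u\|_{L^1}^{1-\delta}\|u\|_{L^2}^{2\delta}\|\varphi\Nx u\|_{L^2}^{2}+CC_\varphi^{3+\delta}\|u\|_{L^1}^{3+\delta}$; combined with the first-paragraph reduction and $\|\varphi\Nx u\|_{L^2(\Omega_0)}^2=\inner{\varphi^2}{|\Nx u|^2}$, this is \eqref{eq-key2-2}.

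The step I expect to be the main obstacle is not any single estimate but the bookkeeping of homogeneities in the middle interpolation: the splittings must be chosen so that the powers of $\|u\|_{L^1}$, $\|u\|_{L^2}$ and $\|\varphi\Nx u\|_{L^2}$ coming from $\varphi^{2(2a-1)}=(\varphi^a)^{1-\delta}$ and from the $L^1$--$L^2$ interpolation of $\|u\|_{L^{1+\delta}}$ are precisely those that, after the two Young steps and raising to the power $(3+\delta)/2$, land on $\|u\|_{L^1}^{1-\delta}\|u\|_{L^2}^{2\delta}\|\varphi\Nx u\|_{L^2}^2$ and on $C_\varphi^{3+\delta}\|u\|_{L^1}^{3+\delta}$ --- and, in particular, so that the cut-off constant enters only through the single power $C_\varphi^{3+\delta}$ asserted in \eqref{eq-key2-2}. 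A reassuring consistency check is that setting $\delta=0$ (so $a=2/3$, $2a-1=1/3$) makes every line above reduce verbatim to the corresponding line in the proof of Lemma \ref{lem-key1}.
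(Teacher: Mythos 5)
Your proposal is correct and takes essentially the same route as the paper: the reduction $\inner{\varphi^2}{|u|^{4+\delta}}\le \|u\|_{L^1(\Omega_0)}\|\varphi^{2/(3+\delta)}u\|_{L^\infty(\Omega_0)}^{3+\delta}$ followed by the weighted interpolation bound for $\|\varphi^{2/(3+\delta)}u\|_{L^\infty(\Omega_0)}^2$, which the paper simply invokes as ``the Gagliardo--Nirenberg inequality'' proved as in Lemma \ref{lem-key1}. Your detailed Newton--Leibniz/Cauchy--Schwarz/Young derivation of that step (with the $L^1$--$L^2$ interpolation of $\|u\|_{L^{1+\delta}}$, implicitly using $\delta\le 1$ exactly as the paper does) reproduces the paper's displayed inequality with the correct exponents, so the argument is sound.
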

\begin{proof}
	The proof is similar to that Lemma \ref{lem-key1}. It is clear that
	\begin{equation*}
		\inner{\varphi^2}{u^{4+\delta}} \le \|u\|_{L^1(\Omega_0)}\|\varphi^{\frac{2}{3+\delta}}u\|_{L^{\infty}(\Omega_0)}^{3+\delta}.
	\end{equation*}
	Now by the Gagliardo-Nirenberg inequality,
	\begin{equation*}
		\|\varphi^{\frac{2}{3+\delta}}u\|_{L^\infty(\Omega_0)}^2\le C\(\|u\|_{L^1(\Omega_0)}^{\frac{2(1-\delta)}{3+\delta}}\|\varphi\Nx u\|^{\frac{4}{3+\delta}}_{L^2(\Omega_0)}\|u\|^{\frac{4\delta}{3+\delta}}_{L^2(\Omega_0)}+C_\varphi^2\|u\|_{L^1(\Omega_0)}^2\),
	\end{equation*}
	we get \eqref{eq-key2-2} immediately.
\end{proof}
\begin{proof}[Proof of Theorem \ref{thm:super_cubic}]
	Similar to the proof of Theorem \ref{thm:mass_conservation}, it is enough to show that the local solution satisfies $\|u_i\|_{L^{\infty}(0,T;L^2(\Omega))} \le C(T)$ for all $T\in (0,T_{\max})$ with $C(T)$ is continuous in $T$ for $T\in \R_+$. For $\delta$ defined in \eqref{delta}, we choose $\xi > 0$ small enough such that
	\begin{equation}\label{xi}
		\delta > \frac{(3+\xi)\xi}{2-\xi}.
	\end{equation}
	Let $\eps>0$ be chosen later and $x_0 \in \Omega$ arbitrary. Define by $\cutoff: \R \to [0,1]$ the cut-off function with $|\text{supp}(\cutoff)| \le 2\eps$, $\cutoff$ satisfies the assumption \eqref{e1}
	\begin{equation}\label{b0}
		|\cutoff'| \le C\eps^{-1} \quad \text{ and } \quad |\cutoff| \le C\eps^{\frac{1+\delta}{3+\delta}}|\cutoff'|
	\end{equation}
	with $C$ is independent of $\eps$. By chain rule, it follows that
	\begin{equation*}
		\partial_t(u_i\cutoff) - d_i\pa_{xx}(u_i\cutoff) = \cutoff f_i(x,t,u) + d_i\cutoff' \pa_x u_i + d_i\pa_x(\cutoff' u_i).
	\end{equation*}
	Using the $L^p$-energy function in \eqref{EE} with $p=2$, and applying Lemma \eqref{lem:energy} we have
	\begin{equation}\label{b1}
	\begin{aligned}
		\frac{d}{dt}\EE_2[u\cutoff] &+ \alpha_2\sum_{i=1}^m\|\pa_x(u_i\cutoff)\|_{L^2(\Omega)}^2\\
		&\le C\bra{1+\sum_{i=1}^m\inner{\cutoff^2}{|u_i|^{4+\delta}} + \sum_{i=1}^{m}\bra{\inner{u_i\cutoff}{\pa_xu_i \cutoff'} + \inner{|\cutoff'|^2}{u_i^2} }}.
	\end{aligned}
	\end{equation}
	For the right hand side of \eqref{b1}, we apply Lemma \ref{lem-key2} to the first sum, apply Cauchy-Schwarz and \eqref{b0} to the second and the third sums, to get
	\begin{equation}\label{b3}
	\begin{aligned}
		&\frac{d}{dt}\EE_2[u\cutoff]  + \frac{\alpha_2}{2}\sum_{i=1}^{m}\|\pa_x(u_i\cutoff)\|_{L^2(\Omega)}^2\\
		&\le C\eps^{(2-\xi)\delta}\sum_{i=1}^m\|u_i\|_{\M^{1,\delta}(\Omega)}^{2-\xi}\|u_i\cutoff\|_{L^2(\Omega)}^{2\xi}\inner{\cutoff^2}{|\pa_xu_i|^2} + C\eps^{-2}\sum_{i=1}^m\|u_i\|_{L^2(\Omega)}^2 + C\eps^{-(3+\xi)}\sum_{i=1}^m\|u_i\|_{L^1(\Omega)}^{4+\xi}\\
		&\le C\eps^{(2-\xi)\delta}\sum_{i=1}^m\|u_i\cutoff\|_{L^2(\Omega)}^{2\xi}\inner{\cutoff^2}{|\pa_xu_i|^2} + C\eps^{-2}\sum_{i=1}^m\|u_i\|_{L^2(\Omega)}^2 + C\eps^{-(3+\xi)},
	\end{aligned}
	\end{equation}
	where all constants $C$ are independent of $\eps$ and $T$. Choose $\eps_0>0$ such that
	\begin{equation*}
		C\eps^{(2-\xi)\delta}\sum_{i=1}^m\|u_{i,0}\cutoff\|_{L^2(\Omega)}^{2\xi} \le \frac{\alpha_2}{4} \quad \forall \eps\in (0,\eps_0).
	\end{equation*}
	Let $\eps\in (0,\eps_0)$ and define $T_0 = T_0(\eps)$ as 
	\begin{equation*}
		T_0:= \sup\left\{s\in (0,T_{\max}): \; C\eps^{(2-\xi)\delta}\sum_{i=1}^m\|u_{i}(r)\cutoff\|_{L^2(\Omega)}^{2\xi} < \frac{\alpha_2}{2}, \; \forall r\in (0,s) \right\}.
	\end{equation*}
	We show that $T_0 \ge T$ as $\eps$ is small enough. Indeed, by assuming otherwise, we have
	\begin{equation}\label{b2}
		C\eps^{(2-\xi)\delta}\sum_{i=1}^m\|u_{i}(T_0)\cutoff\|_{L^2(\Omega)}^{2\xi} = \frac{\alpha_2}{2}.
	\end{equation}
	From \eqref{b3} we get, for all $t\in [0,T_0]$,
	\begin{equation*}
		\frac{d}{dt}\EE_2[u(t)\cutoff] \le C\eps^{-2}\sum_{i=1}^m\|u_i\|_{L^2(\Omega)}^2 + C\eps^{-(3+\xi)}.
	\end{equation*}
	Integrating on $(0,t)$, $t\in [0,T_0]$, it follows that
	\begin{equation*}
		\EE_2[u(t)\cutoff] \le \EE_2[u_0\cutoff] + C\eps^{-2}\sum_{i=1}^m\|u_i\|_{L^2(0,T;L^2(\Omega))}^2 + C\eps^{-(3+\xi)}T \le C(T)\eps^{-(3+\xi)}
	\end{equation*}
	where we apply Lemma \ref{lem:duality} and choose $\eps$ small enough. Now from \eqref{b2} and \eqref{norm_equivalent} we can estimate
	\begin{align*}
		\frac{\alpha_2}{2} = C\eps^{(2-\xi)\delta}\sum_{i=1}^m\|u_{i}(T_0)\cutoff\|_{L^2(\Omega)}^{2\xi} \le C\lambda_2^{-1}\eps^{(2-\xi)\delta}[\EE_2[u(T_0)\cutoff]]^{\xi} \le C(T)\eps^{(2-\xi)\delta - \xi(3+\xi)}.
	\end{align*}
	Because of \eqref{xi}, the exponent of $\eps$ on the right hand side is positive, and therefore, by choosing $\eps>0$ small enough, we get a contradiction. Thus, $T_0\ge T$ for such a small $\eps>0$, and we get from \eqref{b3} that 
	\begin{equation*}
		\EE_2[u(t)\cutoff] \le C(T,\eps) \quad \forall t\in (0,T).
	\end{equation*}
	With the partition of $\Omega$ as in Proposition \ref{pro:interp_Morrey} and the equivalent of norms \eqref{norm_equivalent} we obtain
	\begin{equation*}
		\sum_{i=1}^m\|u_i\|_{L^{\infty}(0,T;L^2(\Omega))} \le C(T)
	\end{equation*}
	which is enough to conclude the global existence of solution to \eqref{eq1.1}. The uniform-in-time boundedness follows similarly as in Theorem \ref{thm:mass_control}, so we omit it.
\end{proof}

\medskip
\noindent{\bf Acknowledgement.} B.Q. Tang received funding from the FWF project ``Quasi-steady-state approximation for PDE'', number I-5213. C. Sun is partially
supported by NSFC Grants No. 12271227. J. Yang are partially supported by NSFC Grants
No. 12271227 and China Scholarship Council (Contract No. 202206180025). A. Kostianko and S.Zelik are supported by the Russian Science Foundation (project 23-71-30008), Chapters 2 and 3. S. Zelik is also partially supported  by Laboratory of Dynamical Systems and Applications NRU HSE, grant of the Ministry of science and higher education of the RF, ag. \textnumero  075-15-2022-1101.

\medskip
A special thanks goes to Prof. M\'elanie Hall for pointing us out to the Belousov-Zhabotinsky reaction in subsection \ref{subsec:applications}.

\end{document}